\newtheorem{theorem}{Theorem}[section]
\newtheorem{proposition}[theorem]{Proposition}
\newtheorem{lemma}[theorem]{Lemma}
\newtheorem{corollary}[theorem]{Corollary}
\theoremstyle{definition}
\newtheorem{remark}[theorem]{Remark}
\newcommand{\g}{\ensuremath{\mathcal{G}}} 
\newcommand{\gk}{\ensuremath{\mathcal{G}_{k}}} 
\newcommand{\gbark}{\ensuremath{\mathcal{G}_{\bar{k}}}}
\newcommand{\map}{\ensuremath{\mbox{Map}}} 
\newcommand{\mapstar}{\ensuremath{\mbox{Map}^{\ast}}} 
\newcounter{bean}
\newenvironment{letterlist}{\begin{list}{\rm ({\alph{bean}})}
      {\usecounter{bean}\setlength{\rightmargin}{\leftmargin}}}
      {\end{list}}
\newcommand{\namedright}[3]{\ensuremath{#1\stackrel{#2}
 {\longrightarrow}#3}}
\newcommand{\nameddright}[5]{\ensuremath{#1\stackrel{#2}
 {\longrightarrow}#3\stackrel{#4}{\longrightarrow}#5}}
\newcommand{\namedddright}[7]{\ensuremath{#1\stackrel{#2}
 {\longrightarrow}#3\stackrel{#4}{\longrightarrow}#5
  \stackrel{#6}{\longrightarrow}#7}}
\newcommand{\larrow}{\relbar\!\!\relbar\!\!\rightarrow}
\newcommand{\llarrow}{\relbar\!\!\relbar\!\!\larrow}
\newcommand{\llnamedright}[3]{\ensuremath{#1\stackrel{#2}
 {\llarrow}#3}}
\newcommand{\qqed}{\hfill\Box}
\newcommand{\zmodp}{\ensuremath{\mathbb{Z}\mathit{/p}\mathbb{Z}}}
\begin{document}


\title{The homotopy types of $U(n)$-gauge groups over lens spaces} 
\author{Ingrid Membrillo-Solis} 
\address{Mathematical Sciences, University
         of Southampton, Southampton SO17 1BJ, United Kingdom}
\email{I.Membrillo-Solis@soton.ac.uk} 
\author{Stephen Theriault}
\address{Mathematical Sciences, University
         of Southampton, Southampton SO17 1BJ, United Kingdom}
\email{S.D.Theriault@soton.ac.uk} 

\subjclass[2010]{Primary 55P15, 54C35, Secondary 81T13.} 
\keywords{lens space, gauge group, homotopy type}


\begin{abstract} 
We analyse the homotopy types of gauge groups for principal $U(n)$-bundles 
over lens spaces. 
\end{abstract}

\maketitle

\section{Introduction}
\label{sec:intro} 

Let $G$ be a simple, compact Lie group and let 
\(\namedright{P}{}{M}\) 
be a principal $G$-bundle. The \emph{gauge group} of this bundle 
is the group of $G$-equivariant automorphisms of $P$ that fix $M$. 
There has been a considerable amount of work recently in trying to 
understand the homotopy types of gauge groups that arise in physical 
or geometric contexts. Most work to date has concentrated on $M$ 
being a simply-connected four-manifold when $G$ is simply-connected 
or $M$ being an orientable surface when $G=U(n)$. 

In this paper we turn our attention to the case when $M$ is a $3$-manifold. 
If $G$ is simply-connected then $[M,BG]\cong 0$, implying that the only 
principal $G$-bundle is the trivial bundle, which has the trivial gauge group. 
We consider instead the more topologically intricate case when $G=U(n)$ 
and $M$ is a lens space, for then $[M,BU(n)]\not\cong 0$. 

Let $p$ and $q$ be coprime integers. The \emph{lens space} 
$L(p,q)$ is the orbit space $S^{3}/(\mathbb{Z}/s\mathbb{Z})$, 
where the action of $\mathbb{Z}/p\mathbb{Z}$ on $S^{3}$ is given by 
$(z_{0},z_{1})\longrightarrow (e^{2\pi i/p}z_{0},e^{2\pi iq/p}z_{1})$. 
For $n\geq 1$ and $p\geq 2$, let 
\(\underline{p}\colon\namedright{S^{n}}{}{S^{n}}\) 
be the map of degree $p$ and let $P^{n+1}(p)$ be its homotopy cofibre. 
The space $P^{n+1}(p)$ is the $(n+1)$-dimensional mod-$p$ 
\emph{Moore space}. As a $CW$-complex, $L(p,q)\simeq P^{2}(p)\cup e^{3}$. 

The analysis of gauge groups of principal $U(n)$-bundles over $L(p,q)$ is 
necessarily delicate for two reasons. First, the isomorphism classes of principal 
bundles is determined by $[L(p,q),BU(n)]$, and this set is determined by  
$[P^{2}(p),BU(n)]$ rather than $[S^{3},BU(n)]$. This is in contrast to the 
case when $G$ is simply-connected and $M$ is a simply-connected four-manifold 
or when $G=U(n)$ and~$M$ is an orientable surface; in both cases 
$[M,BG]$ is determined by the top cell of $M$ and this leads to certain homotopy 
fibrations being more easily compared. Second, typically localization techniques 
are used to work one prime at a time, allowing for easier progress. However, 
as $L(p,q)$ may not be nilpotent, localization techniques may be problematic, so we  
approach the problem without localization. The strategy and methods used should 
be applicable to other cases as well. 

Our main result is the following. As will be shown, there are isomorphisms 
$[L(p,q),BU(n)]\cong [P^{2}(p),BU(n)]\cong\mathbb{Z}/p\mathbb{Z}$. For 
$k\in\mathbb{Z}/p\mathbb{Z}$, let $\gk(L(p,q))$ and $\gk(P^{2}(p))$ be the 
gauge groups of the principal $U(n)$-bundles over $L(p,q)$ and $P^{2}(p)$ 
respectively with first Chern class $k$. For integers $a,b$, let $(a,b)$ be 
their greatest common divisor. 

\begin{theorem} 
   \label{main} 
   The following hold: 
   \begin{letterlist} 
      \item if $2\,|\, p$ then there is a homotopy equivalence 
               \[\Omega^{2}\gk(L(p,q))\simeq\Omega^{2}\gk(P^{2}(p))\times\Omega^{5} U(n);\] 
     \item if $k\equiv u\ell\bmod{n}$ where $(u,np)=1$ then there is a homotopy equivalence 
              \[\gk(P^{2}(p))\simeq\g_{\ell}(P^{2}(p)).\] 
   \end{letterlist} 
\end{theorem}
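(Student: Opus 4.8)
Both parts begin from the homotopy equivalence $B\mathcal{G}_{k}(M)\simeq\map_{k}(M,BU(n))$ with the path component of the classifying map $f_{k}$ in the space of unbased maps; looping the evaluation fibration $\nameddright{\mapstar_{f_{k}}(M,BU(n))}{}{\map_{f_{k}}(M,BU(n))}{}{BU(n)}$ realises $\mathcal{G}_{k}(M)$ as the homotopy fibre of a map $\partial_{k}\colon\namedright{U(n)}{}{\mapstar_{k}(M,BU(n))}$. For (a), take the cofibration $\nameddright{S^{2}}{\alpha}{P^{2}(p)}{i}{L(p,q)}$ coming from $L(p,q)\simeq P^{2}(p)\cup e^{3}$, with $\alpha$ the attaching map of the top cell and cofibre $S^{3}$; under $[L(p,q),BU(n)]\cong[P^{2}(p),BU(n)]$ the map $i^{*}$ sends $f_{k}$ to $f_{k}$. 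Applying $\mapstar(-,BU(n))$ to this cofibration, restricting to the $k$-th components, and using that $\mapstar(S^{3},BU(n))=\Omega^{3}BU(n)$ is connected (since $\pi_{3}(BU(n))=0$), one gets a homotopy fibration $\nameddright{\Omega^{3}BU(n)}{}{\mapstar_{k}(L(p,q),BU(n))}{i^{*}}{\mapstar_{k}(P^{2}(p),BU(n))}$. By naturality of the evaluation fibration with respect to $i$ this is compatible with the two copies of $\partial_{k}$; since the left-hand map $U(n)=U(n)$ is the identity, the homotopy fibre of the induced map $\namedright{\mathcal{G}_{k}(L(p,q))}{}{\mathcal{G}_{k}(P^{2}(p))}$ is $\Omega$ of the fibre $\Omega^{3}BU(n)$ of $i^{*}$, yielding a homotopy fibration
\[\nameddright{\Omega^{4}BU(n)}{}{\mathcal{G}_{k}(L(p,q))}{i^{*}}{\mathcal{G}_{k}(P^{2}(p))}.\]

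Looping twice gives $\nameddright{\Omega^{5}U(n)}{}{\Omega^{2}\mathcal{G}_{k}(L(p,q))}{\Omega^{2}i^{*}}{\Omega^{2}\mathcal{G}_{k}(P^{2}(p))}$, and part (a) is exactly the assertion that this splits as a product. Since $\Omega^{2}\mathcal{G}_{k}(L(p,q))$ is a homotopy-associative $H$-space and $\Omega^{2}i^{*}$ an $H$-map, it is enough to construct a section of $\Omega^{2}i^{*}$; the fibration being classified by a map $\delta_{k}\colon\namedright{\mathcal{G}_{k}(P^{2}(p))}{}{\Omega^{3}BU(n)}$, such a section exists once $\Omega^{2}\delta_{k}\colon\namedright{\Omega^{2}\mathcal{G}_{k}(P^{2}(p))}{}{\Omega^{5}BU(n)}$ is null homotopic. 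The map $\delta_{k}$ is governed by $\alpha$ (together with $f_{k}$): up to adjunction it is built from precomposition with the suspension $\Sigma\alpha\colon\namedright{S^{3}}{}{P^{3}(p)}$, so $\Omega^{2}\delta_{k}$ is controlled by the image of $\Sigma\alpha$ in the homotopy of $BU(n)$. Because the $2$-cell of $P^{2}(p)$ is attached by a map of degree $p$, this image should come out as $p$ times a fixed class in a group of exponent $2$, so that $\Omega^{2}\delta_{k}$ vanishes when $p$ is even (and in general not otherwise, which is why the statement assumes $2\mid p$). Pinning down $\delta_{k}$ precisely in terms of $\Sigma\alpha$ and carrying out this divisibility computation in $\pi_{*}(U(n))$ is, I expect, the main obstacle in (a).

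Part (b) needs no fibrations: it follows from two families of homotopy self-equivalences of $\map(P^{2}(p),BU(n))$. For a complex line bundle $L$ over $P^{2}(p)$ with $c_{1}(L)=m$, the pointwise tensor product $g\mapsto g\otimes L$ is a self-equivalence (inverse $g\mapsto g\otimes L^{-1}$), and as $c_{1}(V\otimes L)=c_{1}(V)+n\,c_{1}(L)$ it carries the component of $f_{k}$ to that of $f_{k+mn}$, so $\mathcal{G}_{k}(P^{2}(p))\simeq\mathcal{G}_{k+mn}(P^{2}(p))$ for every integer $m$. For $u$ coprime to $p$ there is a homotopy self-equivalence $\rho_{u}\colon\namedright{P^{2}(p)}{}{P^{2}(p)}$ inducing multiplication by $u$ on $\pi_{1}(P^{2}(p))\cong\mathbb{Z}/p\mathbb{Z}$, hence on $H^{2}(P^{2}(p);\mathbb{Z})\cong\mathbb{Z}/p\mathbb{Z}$; precomposition with $\rho_{u}$ is a self-equivalence of $\map(P^{2}(p),BU(n))$ taking the component of $f_{k}$ to that of $f_{uk}$, so $\mathcal{G}_{k}(P^{2}(p))\simeq\mathcal{G}_{uk}(P^{2}(p))$ (this is vacuous when $p$ is even, $u$ then being odd). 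Given $k\equiv u\ell\bmod{n}$ with $(u,np)=1$, write $k=u\ell+mn$; then
\[\mathcal{G}_{\ell}(P^{2}(p))\simeq\mathcal{G}_{u\ell}(P^{2}(p))\simeq\mathcal{G}_{u\ell+mn}(P^{2}(p))=\mathcal{G}_{k}(P^{2}(p)),\]
which is (b). The only delicate ingredient is the existence of $\rho_{u}$ and the fact that it is a homotopy equivalence when $(u,p)=1$: for odd $p$ it is the standard degree-$u$ self-map of a mod-$p$ Moore space, and for $p=2$ it is automatic since then $u\equiv1\bmod{2}$.
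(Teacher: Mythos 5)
For part (a) your reduction is set up correctly---the fibration $\Omega^{4}BU(n)\rightarrow\gk(L(p,q))\rightarrow\gk(P^{2}(p))$ does exist and the theorem is equivalent to a null homotopy of the twice-looped connecting map---but the argument stops exactly where the work begins, as you yourself flag. Two things in the sketch would not survive being made precise. First, $\delta_{k}$ is the connecting map of a fibration of homotopy fibres over $U(n)$, so it involves the boundary map $\partial^{L}_{k}$ and not only precomposition with $\Sigma\alpha$; its vanishing is therefore not a formal consequence of $\Sigma^{2}\alpha\simeq\ast$. Second, the divisibility heuristic is aimed at the wrong group: $\Sigma^{2}\alpha$ is null homotopic for \emph{every} $p$ (the top cell of the parallelizable $3$-manifold $L(p,q)$ splits off stably by Atiyah's theorem, which is Lemma~\ref{2suspL}), so that cannot be where $2\mid p$ enters. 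In the paper the parity condition is used in Lemma~\ref{ginv} to choose the section $r$ of $\Sigma^{2}g$ compatibly with the map $h\colon\namedright{L(p,q)}{}{S^{2}\vee S^{3}}$, the obstruction being $p\cdot\eta$ in $\pi_{4}(S^{3})\cong\mathbb{Z}/2\mathbb{Z}$ (a homotopy group of $S^{3}$, not of $U(n)$), and that compatibility is then what forces $\overline{r}\circ\Omega^{2}\partial^{L}_{k}\simeq\ast$ (Lemma~\ref{rboundary}). Without those two steps one only splits $\Omega^{2}\mapstar_{k}(L(p,q),BU(n))$, not $\Omega^{2}\gk(L(p,q))$. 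So there is a genuine gap here: the key null homotopy is asserted, not proved, and the proposed mechanism for it is not the one that works.

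Part (b) is essentially correct and takes a genuinely different route. The paper proves $\partial^{P}_{k}\simeq\underline{u}^{\ast}\circ\partial^{P}_{\ell}$ using the fact that $\partial^{S}_{1}$ has order exactly $n$ in $[U(n),\Omega_{0}U(n)]$ together with Lang's identity $\partial^{S}_{k}\simeq k\circ\partial^{S}_{1}$; you instead combine the tensor-by-a-line-bundle isomorphism $\gk(P^{2}(p))\cong\g_{k+nm}(P^{2}(p))$ (standard for $U(n)$-gauge groups, as in Crabb--Sutherland) with precomposition by a degree-$u$ self-map. This avoids the order computation entirely and even upgrades the $k\mapsto k+nm$ step to an isomorphism of topological groups, which is a genuine gain. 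The one point you should not wave away is the claim that $\rho_{u}$ is a homotopy self-equivalence of $P^{2}(p)$: this space is not simply connected (and need not be nilpotent), so Whitehead's theorem does not apply to $\rho_{u}$ directly. The paper sidesteps this by showing only that the induced map on the pointed mapping space is a homotopy equivalence (Lemmas~\ref{suspd} and~\ref{unit}, via the suspensions $\Sigma^{m}\underline{u}$ and Milnor's CW structure on the mapping space), and that weaker statement is all your argument actually needs. Finally, your parenthetical that the $\rho_{u}$ step is vacuous when $p$ is even is only true for $p=2$; for general even $p$, $u$ odd does not force $u\equiv 1\bmod p$.
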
 

Theorem~\ref{main}~(a) implies that if $2\mid p$ then the higher homotopy groups of $\gk(L(p,q))$ 
are determined by $\gk(P^{2}(p))$ and $U(n)$, and part~(b) implies that if in addition 
$k\equiv u\ell\bmod{n}$ for $(u,np)=1$ then $\Omega^{2}\gk(L(p,q))\simeq\Omega^{2}\g_{\ell}(L(p,q))$, 
even though the underlying principal $U(n)$-bundles might be distinct.

\section{Isomorphism classes of bundles and components of mapping spaces} 
\label{sec:bundles} 

As a $CW$-complex $L(p,q)\simeq P^{2}(p)\cup e^{3}$, so there is a homotopy cofibration 
\[\nameddright{S^{2}}{f}{P^{2}(p)}{i}{L(p,q)}\] 
where $f$ attaches the top cell to $L(p,q)$ and $i$ is the inclusion of the $2$-skeleton. Let 
\(\pi\colon\namedright{P^{2}(p)}{}{S^{2}}\) 
be the pinch map to the top cell. Let $j$ be the composite of inclusions 
\(j\colon\nameddright{S^{1}}{}{P^{2}(p)}{i}{L(p,q)}\). 
Then there is a homotopy cofibration diagram 
\begin{equation} 
  \label{Lpo} 
  \diagram 
      & S^{1}\rdouble\dto & S^{1}\dto^{j} & \\ 
      S^{2}\rto^-{f}\ddouble & P^{2}(p)\rto^-{i}\dto^{\pi} & L(p,q)\rto^-{g}\dto^{h} & S^{3}\ddouble \\ 
      S^{2}\rto^-{\pi\circ f} & S^{2}\rto & C\rto & S^{3} 
  \enddiagram 
\end{equation}  
that defines the space $C$ and the maps $g$ and $h$. 

\begin{lemma} 
   \label{Lpofacts} 
   The map $\pi\circ f$ is null homotopic and there is a homotopy equivalence 
   $C\simeq S^{2}\vee S^{3}$.   
\end{lemma}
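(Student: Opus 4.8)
The plan is to use the homotopy cofibration diagram~(\ref{Lpo}) together with basic properties of Moore spaces. First I would show that $\pi\circ f$ is null homotopic. The composite $\namedright{S^{2}}{f}{P^{2}(p)}$ followed by $\namedright{P^{2}(p)}{\pi}{S^{2}}$ is an element of $\pi_{2}(S^{2})\cong\mathbb{Z}$, so it is classified by its degree. To compute this degree I would pass to mod-$p$ homology (or integral homology): the attaching map $f$ for the top cell of $L(p,q)$ induces the zero map on $H_{2}(-;\mathbb{Z})$ since $H_{2}(P^{2}(p);\mathbb{Z})=0$, while more to the point, in the cellular chain complex of $L(p,q)\simeq P^{2}(p)\cup e^{3}$ the boundary map $\partial_{3}$ is zero (the homology of $L(p,q)$ in degree $2$ is $\mathbb{Z}/p\mathbb{Z}$, carried by the $2$-cell, and there is no cancellation). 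Hence $f$ is degree zero on the bottom cell's worth of homology and $\pi\circ f$ induces zero on $H_{2}$, so it has degree zero and is null homotopic. Alternatively, and perhaps more cleanly, one observes that $f$ factors through the inclusion of the bottom cell $\namedright{S^{1}}{}{P^{2}(p)}$ up to the relevant homotopy—but actually the top row of~(\ref{Lpo}) already encodes that the relevant comparison is with $\namedright{S^{1}}{}{L(p,q)}$; I would instead just use the homology computation, which is routine.

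Once $\pi\circ f\simeq\ast$, the bottom-left homotopy cofibration $\nameddright{S^{2}}{\pi\circ f}{S^{2}}{}{C}$ splits: a null homotopic attaching map means $C\simeq S^{2}\vee\Sigma S^{2}=S^{2}\vee S^{3}$. This gives the desired homotopy equivalence $C\simeq S^{2}\vee S^{3}$ directly, with no further work, since $C$ is by definition the cofibre of $\pi\circ f$ as read off from the middle column and bottom row of~(\ref{Lpo}).

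I expect the only real point requiring care is the degree computation for $\pi\circ f$, i.e.\ confirming that the top attaching map of $L(p,q)$ really does interact trivially with the pinch map $\pi$. The subtlety is that $\pi$ collapses the bottom cell $S^{1}$ of $P^{2}(p)$, and $f$ is a priori some map $S^{2}\to P^{2}(p)$ whose class in $\pi_{2}(P^{2}(p))$ need not be obvious; however $\pi_{2}(P^{2}(p))\cong\mathbb{Z}/p\mathbb{Z}$ (generated by the Hurewicz-nontrivial $2$-cell inclusion composed appropriately) maps under $\pi_{*}$ to $\pi_{2}(S^{2})\cong\mathbb{Z}$, and since the source is torsion and the target is torsion-free, $\pi_{*}$ is automatically zero on $\pi_{2}$. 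That observation makes the null homotopy immediate and sidesteps any explicit chain-level bookkeeping. After that, the splitting of a cofibre along a null homotopic map is standard, so the lemma follows.
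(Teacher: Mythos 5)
Your overall strategy is the same as the paper's: show that $\pi\circ f$ has degree zero and then use the standard fact that the cofibre of a null homotopic map $A\to B$ is $B\vee\Sigma A$. The paper detects the degree via the Bockstein in the homology of $C$, compared with that of $L(p,q)$; your primary route is the cellular chain complex of $L(p,q)$. In fact your very first observation already finishes the job and is more elementary than either: since $\deg(\pi\circ f)$ is the image of the fundamental class under $\pi_{*}\circ f_{*}$, it factors through $H_{2}(P^{2}(p);\mathbb{Z})=0$, so it vanishes, and a degree-zero self-map of $S^{2}$ is null homotopic. Equivalently, $\deg(\pi\circ f)$ is the cellular boundary $\partial_{3}$ of $L(p,q)$, which is zero because $\partial_{2}$ (multiplication by $p$) is injective, or because $H_{3}(L(p,q);\mathbb{Z})\cong\mathbb{Z}$. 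So the core of your argument is correct and complete.

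Two supporting claims are wrong, however, and one of them matters. First, $H_{2}(L(p,q);\mathbb{Z})=0$, not $\mathbb{Z}/p\mathbb{Z}$; it is $H_{1}$ that is $\mathbb{Z}/p\mathbb{Z}$. This is harmless since $\partial_{3}=0$ holds anyway, for the reasons above. Second, and more seriously, $\pi_{2}(P^{2}(p))$ is \emph{not} $\mathbb{Z}/p\mathbb{Z}$. The space $P^{2}(p)$ is not simply connected; its universal cover is a circle with $p$ two-cells attached by degree-one maps, so $\pi_{2}(P^{2}(p))\cong H_{2}(\widetilde{P^{2}(p)})\cong\mathbb{Z}^{p-1}$ is free abelian (for $p=2$, $\pi_{2}(\mathbb{R}P^{2})\cong\mathbb{Z}$). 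Consequently the ``torsion source maps trivially to torsion-free target'' shortcut that you settle on in your final paragraph rests on a false computation and does not work as stated. The conclusion that $\pi_{*}$ vanishes on $\pi_{2}$ is nevertheless true, but for the homological reason above (the degree factors through $H_{2}(P^{2}(p);\mathbb{Z})=0$), not for torsion reasons. If you strike the $\pi_{2}$ discussion and the misstatement of $H_{2}(L(p,q))$, what remains is a correct and cleaner proof than the one in the paper.
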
 

\begin{proof} 
The degree of $\pi\circ f$ is detected by the Bockstein in the homology of $C$, but this 
Bockstein is zero since the corresponding Bockstein for $L(p,q)$ is zero.  
Therefore $\pi\circ f\simeq\ast$, implying that $C\simeq S^{2}\vee S^{3}$. 
\end{proof} 

Let $\overline{\pi}$ be the composite 
\[\overline{\pi}\colon\namedddright{L(p,q)}{h}{C}{\simeq}{S^{2}\vee S^{3}}{}{S^{2}}\] 
where the right map collapses $S^{3}$ to a point. Lemma~\ref{Lpofacts} immediately 
implies the following. 

\begin{corollary} 
   \label{pinchext} 
   The pinch map 
   \(\namedright{P^{2}(p)}{\pi}{S^{2}}\) 
   extends across $i$ to a map 
   \(\overline{\pi}\colon\namedright{L(p,q)}{}{S^{2}}\).~$\qqed$  
\end{corollary}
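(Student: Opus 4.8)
The plan is to verify that the map $\overline{\pi}$ just constructed genuinely restricts to the pinch map along $i$; once this is done the corollary follows, since the assertion that $\pi$ extends across $i$ means precisely that there is a map out of $L(p,q)$ whose composite with $i$ is homotopic to $\pi$. So the goal is to show $\overline{\pi}\circ i\simeq\pi$.

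First I would extract the relevant commutativity from the homotopy cofibration diagram~\eqref{Lpo}: its lower middle square says that $h\circ i\simeq\iota\circ\pi$, where $\iota$ denotes the middle map of the bottom cofibration $\nameddright{S^{2}}{\pi\circ f}{S^{2}}{\iota}{C}$. Recalling that $\overline{\pi}$ is by definition the composite of $h$ with the equivalence $C\xrightarrow{\simeq}S^{2}\vee S^{3}$ of Lemma~\ref{Lpofacts} and the collapse onto $S^{2}$, this gives that $\overline{\pi}\circ i$ is the composite $\namedddright{P^{2}(p)}{\pi}{S^{2}}{\iota}{C}{}{S^{2}}$, where the last two maps are $\iota$ followed by the equivalence and the collapse. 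It therefore suffices to check that the composite $\namedright{S^{2}}{}{S^{2}}$ obtained from $\iota$, the equivalence, and the collapse is homotopic to the identity.

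The key point is that the homotopy equivalence $C\simeq S^{2}\vee S^{3}$ supplied by Lemma~\ref{Lpofacts} can be chosen so that this holds. Since $\pi\circ f\simeq\ast$ by that lemma, the identity map $\mathrm{id}_{S^{2}}$ precomposed with $\pi\circ f$ is null homotopic, so $\mathrm{id}_{S^{2}}$ extends over the homotopy cofibre $C$ to a map $r\colon\namedright{C}{}{S^{2}}$ with $r\circ\iota\simeq\mathrm{id}_{S^{2}}$. Pairing $r$ with the map $\namedright{C}{}{S^{3}}$ of~\eqref{Lpo} realizes the equivalence $C\simeq S^{2}\vee S^{3}$, under which $\iota$ corresponds to the inclusion of the first wedge summand, and $r$ is exactly the equivalence followed by the collapse onto $S^{2}$. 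Hence the composite of $\iota$ with the equivalence and the collapse is $r\circ\iota\simeq\mathrm{id}_{S^{2}}$, and therefore $\overline{\pi}\circ i\simeq\mathrm{id}_{S^{2}}\circ\pi=\pi$, which proves the corollary.

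There is essentially no serious obstacle here; the only point that needs a moment's care is the compatibility of the splitting of $C$ with the cofibre inclusion $\iota$, which is why one builds the retraction $r$ explicitly from the null homotopy of $\pi\circ f$ rather than invoking an arbitrary equivalence $C\simeq S^{2}\vee S^{3}$. Alternatively, the homotopy $h\circ i\simeq\iota\circ\pi$ can be read off directly from the construction of~\eqref{Lpo} as a map of cofibrations, with the remainder of the argument unchanged.
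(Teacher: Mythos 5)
Your proposal is correct and follows the same route the paper takes (the paper simply asserts that Lemma~\ref{Lpofacts} ``immediately implies'' the corollary, with $\overline{\pi}$ defined as $h$ followed by the equivalence $C\simeq S^{2}\vee S^{3}$ and the collapse). Your extra care in building the retraction $r\colon C\to S^{2}$ from the null homotopy of $\pi\circ f$, so that the equivalence is compatible with the cofibre inclusion $\iota$, is exactly the detail the paper leaves implicit.
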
 

If $n=1$ then $BU(1)$ is the Eilenberg-Mac Lane space $K(\mathbb{Z},2)$ 
and for any $CW$-complex $X$ the set $[X,BU(1)]$ has a group structure. If $n>1$ 
then the standard inclusion 
\(\namedright{BU(n)}{}{BU(\infty)}\) 
has homotopy fibre $U(\infty)/U(n)$ which is $2n$-connected. Thus if $X$ 
is a $CW$-complex of dimension~$\leq 2n$ then there is an isomorphism 
$[X,BU(n)]\cong [X,BU(\infty)]$. In particular, as $BU(\infty)$ is an infinite loop 
space, $[X,BU(n)]$ has a group structure. In our case, each space in the 
homotopy cofibration sequences 
\(\namedddright{S^{1}}{}{P^{2}(p)}{\pi}{S^{2}}{\underline{p}}{S^{2}}\) 
and 
\(\namedddright{S^{2}}{f}{P^{2}(p)}{i}{L(p,q)}{g}{S^{3}}\) 
has dimension~$\leq 4$ so for any $n\geq 1$ we obtain exact sequence of groups 
\begin{equation} 
  \label{bundleseqmoore} 
  \namedddright{[S^{2},BU(n)]}{{\underline{p}}^{\ast}}{[S^{2},BU(n)]}{\pi^{\ast}} 
         {[P^{2}(p),BU(n)]}{}{[S^{1},BU(n)]} 
\end{equation} 
and 
\begin{equation} 
  \label{bundleseq} 
  \namedddright{[S^{3},BU(n)]}{g^{\ast}}{[L(p,q),BU(n)]}{i^{\ast}}{[P^{2}(p),BU(n)]} 
       {f^{\ast}}{[S^{2},BU(n)]}. 
\end{equation} 
Recall that $[S^{2},BU(n)]\cong\pi_{1}(U(n))\cong\mathbb{Z}$. 

\begin{lemma} 
   \label{bundles} 
   Let $n\geq 1$. The following hold: 
   \begin{letterlist} 
      \item there is a group isomorphism $[P^{2}(p),BU(n)]\cong\mathbb{Z}/p\mathbb{Z}$; 
      \item the map $\pi^{\ast}$ is reduction mod-$p$; 
      \item there is a group isomorphism  
               \(\namedright{[L(p,q),BU(n)]}{i^{\ast}}{[P^{2}(p),BU(n)]}\); 
      \item the map $\overline{\pi}^{\ast}$ is reduction mod-$p$. 
   \end{letterlist} 
\end{lemma}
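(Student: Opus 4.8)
The plan is to extract all four statements directly from the two exact sequences~(\ref{bundleseqmoore}) and~(\ref{bundleseq}), feeding in only low-dimensional homotopy groups of $U(n)$ and $BU(n)$.

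I would first treat parts~(a) and~(b) together using~(\ref{bundleseqmoore}). Since $U(n)$ is path-connected, $[S^{1},BU(n)]\cong\pi_{1}(BU(n))\cong\pi_{0}(U(n))=0$, so $\pi^{\ast}$ is surjective. Under the identification $[S^{2},BU(n)]\cong\pi_{2}(BU(n))\cong\pi_{1}(U(n))\cong\mathbb{Z}$, the map $\underline{p}^{\ast}$ is precomposition with a degree-$p$ self-map of $S^{2}$; because $S^{2}$ is a co-$H$-space this precomposition is multiplication by $p$. Hence $[P^{2}(p),BU(n)]$ is the cokernel of multiplication by $p$ on $\mathbb{Z}$, namely $\mathbb{Z}/p\mathbb{Z}$, and $\pi^{\ast}$ is the quotient map, that is, reduction mod~$p$.

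For part~(c) I would use~(\ref{bundleseq}). At the left-hand term, $[S^{3},BU(n)]\cong\pi_{3}(BU(n))\cong\pi_{2}(U(n))=0$ since $\pi_{2}$ of a compact Lie group vanishes; by exactness $g^{\ast}$ is trivial and $i^{\ast}$ is injective. At the right-hand term, $f^{\ast}\colon[P^{2}(p),BU(n)]\to[S^{2},BU(n)]$ is a homomorphism out of the finite group $\mathbb{Z}/p\mathbb{Z}$ (using part~(a)) into the torsion-free group $\mathbb{Z}$, so $f^{\ast}=0$, and by exactness $i^{\ast}$ is surjective. Therefore $i^{\ast}$ is an isomorphism.

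Part~(d) is then formal. Corollary~\ref{pinchext} gives $\overline{\pi}\circ i\simeq\pi$, hence $i^{\ast}\circ\overline{\pi}^{\ast}=\pi^{\ast}$; since $i^{\ast}$ is an isomorphism and $\pi^{\ast}$ is reduction mod~$p$, the map $\overline{\pi}^{\ast}$ is reduction mod~$p$ once the group structure on $[P^{2}(p),BU(n)]$ is transported to $[L(p,q),BU(n)]$ along $i^{\ast}$. I anticipate no real obstacle here: the only points requiring attention are that all terms of both sequences carry compatible abelian group structures making the sequences exact as sequences of groups --- already arranged in the discussion preceding the lemma --- and the standard fact that a degree-$p$ self-map of $S^{2}$ induces multiplication by $p$ on $\pi_{2}(BU(n))$.
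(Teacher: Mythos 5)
Your proposal is correct and follows essentially the same route as the paper: both parts (a) and (b) come from the exact sequence~(\ref{bundleseqmoore}) using $\pi_{1}(BU(n))\cong 0$ and the fact that $\underline{p}^{\ast}$ is multiplication by $p$ on $\pi_{2}(BU(n))\cong\mathbb{Z}$, part (c) comes from~(\ref{bundleseq}) using $\pi_{3}(BU(n))\cong 0$ together with the observation that any homomorphism from a finite group to $\mathbb{Z}$ is trivial, and part (d) follows formally from $\pi\simeq\overline{\pi}\circ i$. Your slightly more explicit justifications (co-$H$-structure of $S^{2}$ for the multiplication-by-$p$ claim, vanishing of $\pi_{2}$ of a compact Lie group) are fine and match what the paper leaves implicit.
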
 
      
\begin{proof} 
In~(\ref{bundleseqmoore}), since $\underline{p}$ is the map of degree $p$, 
the induced map $\underline{p}^{\ast}$ is multiplication by $p$. As $\pi_{2}(BU(n))\cong\mathbb{Z}$ 
and $\pi_{1}(BU(n))\cong 0$, exactness in~(\ref{bundleseqmoore}) immediately implies 
that $[P^{2}(p),BU(n)]\cong\mathbb{Z}/p\mathbb{Z}$ and $\pi^{\ast}$ is reduction mod-$p$, 
proving parts~(a) and~(b). 

As $\pi_{3}(BU(n))\cong 0$, $[P^{2}(p),BU(n)]\cong\mathbb{Z}/p\mathbb{Z}$ and 
$[S^{2},BU(n)]\cong\mathbb{Z}$, from~(\ref{bundleseq}) we obtain an exact sequence of groups 
\[\namedddright{0}{}{[L(p,q),BU(n)]}{i^{\ast}}{\mathbb{Z}/p\mathbb{Z}}{}{\mathbb{Z}}.\] 
Any homomorphism from a finite group to $\mathbb{Z}$ is trivial so, by exactness, 
$i^{\ast}$ is an isomorphism, proving part~(c). 

Since $\pi\simeq\overline{\pi}\circ i$ by Corollay~\ref{pinchext}, part~(d) follows from  
parts~(b) and~(c).  
\end{proof} 

In general, if $X$ is a pointed $CW$-complex then the isomorphism classes of principal 
$U(n)$-bundles over $X$ are classified by the homotopy classes in $[X,BU(n)]$. 
If $P$ is such a bundle, classified by a map~$\alpha$, let $\g_{\alpha}(X)$ be its 
gauge group. This group has a classifying space $B\g_{\alpha}(X)$ and \mbox{by~\cite{G,AB}} 
there is a homotopy equivlalence $B\g_{\alpha}(X)\simeq\map_{\alpha}(X,BU(n))$, 
where $\map_{\alpha}(X,BU(n))$ is the component of the space of continuous maps 
from $X$ to $BU(n)$ that contains $\alpha$. The subgroup $\g_\alpha^*(X)$ of 
$G$-equivariant automorphisms of $P$ that pointwise fix the fibre at the basepoint 
is the pointed gauge group. There is a corresponding component 
of the pointed mapping space, $\mapstar_{\alpha}(X,BU(n))$, and a 
homotopy equivalence $B\g_\alpha^*(X)\simeq\mapstar_{\alpha}(X,BU(n))$. 
Evaluation of maps at the basepoint gives a homotopy fibration sequence 
\[\namedddright{U(n)}{\partial_{\alpha}}{\mapstar_{\alpha}(X,BU(n))}{} 
{\map_{\alpha}(X,BU(n))}{}{BU(n)}.\] 
The homotopy fibre of the connecting map $\partial_{\alpha}$ is $\g_{\alpha}(X)$. 

In our case, we have $[S^{2},BU(n)]\cong\mathbb{Z}$ and, by Lemma~\ref{bundles}, 
$[P^{2}(p),BU(n)]\cong [L(p,q),BU(n)]\cong\mathbb{Z}/p\mathbb{Z}$. Note that, for 
dimensional reasons, the principal $U(n)$-bundles over $S^{2}$, $P^{2}(p)$ and 
$L(p,q)$ are classified by the value of the first Chern class. For $\bar{k}\in\mathbb{Z}$, 
let $\gbark(S^{2})$ be the gauge group of the isomorphism class of principal $U(n)$-bundles 
over $S^{2}$ whose first Chern class is $\bar{k}$. For $k\in\mathbb{Z}/p\mathbb{Z}$, let 
$\gk(P^{2}(p))$ and $\gk(L(p,q))$ be the gauge groups of the isomorphism classes of 
principal $U(n)$-bundles over $P^{2}(p)$ and $L(p,q)$ respectively whose first Chern 
class is $k$. Lemma~\ref{bundles} implies that if $\bar{k}\equiv k\bmod p$ 
then there is a commutative diagram of fibration sequences 
\begin{equation} 
   \label{bignatdgrm} 
   \diagram 
         U(n)\rto^-{\partial_{\bar{k}}^{S}}\ddouble 
              & \mapstar_{\bar{k}}(S^{2},BU(n))\rto\dto^-{\overline{\pi}^{\ast}} 
              & \map_{\bar{k}}(S^{2},BU(n))\rto\dto^-{\overline{\pi}^{\ast}} & BU(n)\ddouble \\ 
         U(n)\rto^-{\partial^{L}_{k}}\ddouble & \mapstar_{k}(L(p,q),BU(n))\rto\dto^{i^{\ast}} 
              & \map_{k}(L(p,q),BU(n))\rto\dto^{i^{\ast}} & BU(n)\ddouble \\ 
         U(n)\rto^-{\partial^{P}_{k}} & \mapstar_{k}(P^{2}(p),BU(n))\rto & \map_{k}(P^{2}(p),BU(n))\rto 
              & BU(n).   
   \enddiagram 
\end{equation} 
The homotopy fibres of $\partial_{\bar{k}}(S)$, $\partial_{k}^{L}$ and $\partial_{k}^{P}$ are 
$\gbark(S^{2})$, $\gk(L(p,q))$ and $\gk(P^{2}(p))$ respectively. 

The goal is to find information about the gauge groups $\gk(L(p,q))$ via the 
middle homotopy fibration in~(\ref{bignatdgrm}). However, it is not so easy to 
study this fibration directly, one issue being that it is unclear whether the 
components $\mapstar_{k}(L(p,q),BU(n))$ are all homotopy equivalent. A similar 
issue appeared in work of the first author~\cite{MS} in dealing with gauge groups 
for principal $G$-bundles over $S^{3}$-bundles over $S^{4}$, where $G$ is a 
simply-connected, simple compact Lie group. The approach in that case involved 
localization, which needs to be avoided here since $P^{2}(p)$ need not be nilpotent. 
Instead, we obtain information indirectly: by~\cite{S}, information about $\gbark(S^{2})$ 
via the top fibration in~(\ref{bignatdgrm}) is known and in Section~\ref{sec:moore} we 
will determine information about $\gk(P^{2}(p))$ via the bottom fibration in~(\ref{bignatdgrm}). 
In Section~\ref{sec:ggLsplitting} a splitting result is proved that lets us use 
the information about $\gk(P^{2}(p))$ to deduce information about $\gk(L(p,q))$.

\section{The homotopy types of $\gk(P^{2}(p))$} 
\label{sec:moore} 

By Corollary~\ref{pinchext}, the pinch map 
\(\namedright{P^{2}(p)}{\pi}{S^{2}}\) 
is homotopic to the composite 
\(\nameddright{P^{2}(p)}{i}{L(p,q)}{\overline{\pi}}{S^{2}}.\) 
So from~(\ref{bignatdgrm}) we obtain a homotopy commutative diagram of fibration sequences 
\begin{equation} 
  \label{SPnat} 
  \diagram 
         U(n)\rto^-{\partial_{\bar{k}}^{S}}\ddouble & \mapstar_{\bar{k}}(S^{2},BU(n))\rto\dto^{\pi^{\ast}} 
              & \map_{\bar{k}}(S^{2},BU(n))\rto\dto^{\pi^{\ast}} & BU(n)\ddouble \\ 
         U(n)\rto^-{\partial_{k}^{P}} & \mapstar_{k}(P^{2}(p),BU(n))\rto 
              & \map_{k}(P^{2}(p),BU(n))\rto & BU(n).    
   \enddiagram 
\end{equation} 
First, we show that all the components $\mapstar_{k}(P^{2}(p),BU(n))$ are homotopy 
equivalent, and in a way that is compatible with a similar result from~\cite{S} about 
the components $\mapstar_{\bar{k}}(S^{2},BU(n))$. 
   
\begin{lemma} 
   \label{matchingequivs} 
   For $\bar{k}\in\mathbb{Z}$ and $k\in\mathbb{Z}/p\mathbb{Z}$ with $\bar{k}\equiv k\bmod{p}$, 
   there is a homotopy commutative diagram 
   \[\diagram 
         \mapstar_{\bar{k}}(S^{2},BU(n))\rto^-{\pi^{\ast}}\dto^{\simeq} 
                & \mapstar_{k}(P^{2}(p),BU(n))\dto^{\simeq} \\ 
         \mapstar_{0}(S^{2},BU(n))\rto^-{\pi^{\ast}} & \mapstar_{0}(P^{2}(p),BU(n)). 
      \enddiagram\] 
\end{lemma}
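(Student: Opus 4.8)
The strategy is to realise both vertical arrows as the canonical homotopy equivalences that ``translate'' components of a mapping space, obtained from the coaction of a cofibration, and then to deduce commutativity of the square from the naturality of that coaction.

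The main input is the coaction $\psi\colon\namedright{P^{2}(p)}{}{P^{2}(p)\vee S^{2}}$ attached to the cofibration \(\nameddright{S^{1}}{}{P^{2}(p)}{\pi}{S^{2}}\). Writing $r_{1},r_{2}$ for the retractions of $P^{2}(p)\vee S^{2}$ onto its wedge summands, $\psi$ satisfies $r_{1}\circ\psi\simeq\mathrm{id}$ and $r_{2}\circ\psi\simeq\pi$ and is coassociative with respect to the comultiplication $\nu\colon\namedright{S^{2}}{}{S^{2}\vee S^{2}}$ on the cofibre. Because $S^{2}$ is a suspension, the rule $(f,g)\mapsto(f\vee g)\circ\psi$ defines a homotopy-unital, homotopy-associative action of the group-like H-space $\map_{*}(S^{2},BU(n))=\Omega^{2}BU(n)$ on $\map_{*}(P^{2}(p),BU(n))$, and on $\pi_{0}$ it sends $[f]$ to $[f]+\pi^{\ast}[g]$, which by Lemma~\ref{bundles}~(b) is reduction mod $p$ in the second slot; hence the induced action on $\pi_{0}(\map_{*}(P^{2}(p),BU(n)))\cong\mathbb{Z}/p\mathbb{Z}$ is transitive. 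Since translation by a map and by its homotopy inverse are mutually inverse homotopy equivalences, fixing a based map $g_{\bar{k}}\colon\namedright{S^{2}}{}{BU(n)}$ of degree $-\bar{k}$ and setting $\Psi^{P}_{k}(f)=(f\vee g_{\bar{k}})\circ\psi$ produces a homotopy equivalence $\Psi^{P}_{k}\colon\namedright{\mapstar_{k}(P^{2}(p),BU(n))}{}{\mapstar_{0}(P^{2}(p),BU(n))}$. The same recipe applied to the trivial cofibration \(\nameddright{\ast}{}{S^{2}}{\mathrm{id}}{S^{2}}\), whose coaction is $\nu$, yields a homotopy equivalence $\Psi^{S}_{\bar{k}}\colon\namedright{\mapstar_{\bar{k}}(S^{2},BU(n))}{}{\mapstar_{0}(S^{2},BU(n))}$, namely $h\mapsto(h\vee g_{\bar{k}})\circ\nu$, that is, translation by $g_{\bar{k}}$ in the H-space $\map_{*}(S^{2},BU(n))$. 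This coincides up to homotopy with the equivalence from~\cite{S}, since any two translations carrying $\mapstar_{\bar{k}}(S^{2},BU(n))$ onto $\mapstar_{0}(S^{2},BU(n))$ differ by translation by a nullhomotopic map, which is homotopic to the identity.

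Taking $\Psi^{S}_{\bar{k}}$ and $\Psi^{P}_{k}$ as the two vertical maps, commutativity of the square follows once $\pi^{\ast}\colon\namedright{\map_{*}(S^{2},BU(n))}{}{\map_{*}(P^{2}(p),BU(n))}$ is seen to be equivariant for these actions, and that reduces to the identity $(\pi\vee\mathrm{id}_{S^{2}})\circ\psi\simeq\nu\circ\pi$ of maps out of $P^{2}(p)$, which is exactly the naturality of the coaction applied to the map of cofibration sequences that is $\pi$ on the total spaces and $\mathrm{id}_{S^{2}}$ on the cofibres. Granting it, for a based map $h\colon\namedright{S^{2}}{}{BU(n)}$ of degree $\bar{k}$ one computes
\[((h\circ\pi)\vee g_{\bar{k}})\circ\psi=(h\vee g_{\bar{k}})\circ(\pi\vee\mathrm{id}_{S^{2}})\circ\psi\simeq(h\vee g_{\bar{k}})\circ\nu\circ\pi,\]
whose left-hand side is $\Psi^{P}_{k}(\pi^{\ast}h)$ and whose right-hand side is $\pi^{\ast}(\Psi^{S}_{\bar{k}}(h))$; since these homotopies are natural in $h$, they assemble into a homotopy $\Psi^{P}_{k}\circ\pi^{\ast}\simeq\pi^{\ast}\circ\Psi^{S}_{\bar{k}}$, which is the required homotopy-commutative square. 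The step I expect to require the most care is making the ``action of a group-like H-space'' reasoning precise --- in particular verifying that translation by a map in the basepoint component of $\map_{*}(S^{2},BU(n))$ is homotopic to the identity, so that $\Psi^{S}_{\bar{k}}$ is well defined up to homotopy and agrees with the equivalence invoked in~\cite{S}; the geometric identity $(\pi\vee\mathrm{id}_{S^{2}})\circ\psi\simeq\nu\circ\pi$ is the heart of the matter but should be formal once the coaction has been set up.
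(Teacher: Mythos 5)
Your proposal is correct and follows essentially the same route as the paper: both use the coaction $\psi\colon P^{2}(p)\to P^{2}(p)\vee S^{2}$ to define translation equivalences between components (the paper's $\theta,\theta'$ are your $\Psi^{S}_{\bar k},\Psi^{P}_{k}$), and both reduce commutativity of the square to the compatibility $(\pi\vee 1)\circ\psi\simeq\sigma\circ\pi$ of the coaction with the comultiplication on the cofibre. The only cosmetic difference is that you phrase the construction as an action of the H-space $\mbox{Map}^{\ast}(S^{2},BU(n))$ and spend extra care matching your equivalence with the one in~\cite{S}, neither of which changes the substance of the argument.
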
 

\begin{proof} 
This was essentially proved in~\cite{S} but not stated in this form. An argument is given 
for the sake of completeness. Let 
\(\epsilon\colon\namedright{S^{2}}{}{BU(n)}\) 
be a fixed map with first Chern class $-\bar{k}$. Define 
\[\theta\colon\namedright{\mapstar_{\bar{k}}(S^{2},BU(n))}{}{\mapstar_{0}(S^{2},BU(n))}\] 
by sending a map 
\(f\colon\namedright{S^{2}}{}{BU(n)}\) 
with first Chern class $\bar{k}$ to the composite 
\[\theta(f)\colon\namedddright{S^{2}}{\sigma}{S^{2}\vee S^{2}}{f\vee\epsilon}{BU(n)\vee BU(n)} 
      {\nabla}{BU(n)}\] 
where $\sigma$ is the comultiplication on $S^{2}$ and $\nabla$ is the folding map. 
Similarly, define 
\[\phi\colon\namedright{\mapstar_{0}(S^{2},BU(n))}{}{\mapstar_{\bar{k}}(S^{2},BU(n))}\] 
by sending $g$ to $\nabla\circ(g\vee(-\epsilon))\circ\sigma$. Then $\theta$ and $\phi$ 
are continuous and the homotopy associativity of~$\sigma$ implies that $\phi\circ\theta$ 
and $\theta\circ\phi$ are homotopic to the identity maps. 

The space $P^{2}(p)$ is not a co-$H$-space. However, as $\pi$ is a homotopy cofibration 
connecting map there is a coaction 
\(\psi\colon\namedright{P^{2}(p)}{}{P^{2}(p)\vee S^{2}}\) 
which, when pinched to $P^{2}(p)$ is the identity map, and when pinched to $S^{2}$ 
is $\pi$. Further, this coaction has a homotopy associativity property: 
$(\psi\vee 1)\circ\psi\simeq 1\vee\sigma$. Define 
\[\theta'\colon\namedright{\mapstar_{k}(P^{2}(p),BU(n))}{}{\mapstar_{0}(P^{2}(p),BU(n))}\] 
by sending a map 
\(f'\colon\namedright{P^{2}(p)}{}{BU(n)}\) 
with first Chern class $k$ to the composite 
\[\theta'(f)\colon\namedddright{P^{2}(p)}{\psi}{P^{2}(p)\vee S^{2}}{f'\vee\epsilon}{BU(n)\vee BU(n)} 
      {\nabla}{BU(n)}\] 
and define 
\[\phi'\colon\namedright{\mapstar_{0}(P^{2}(p),BU(n))}{}{\mapstar_{k}(P^{2}(p),BU(n))}\] 
by sending $g$ to $\nabla\circ(g\vee(-\epsilon))\circ\psi$. Then, as before, $\theta'$ is 
a homotopy equivalence. 

Finally, the coaction $\psi$ satisfies a homotopy commutative diagram 
\[\diagram 
       P^{2}(p)\rto^-{\psi}\dto^{\pi} & P^{2}(p)\vee S^{2}\dto^{\pi\vee 1} \\ 
       S^{2}\rto^-{\sigma} & S^{2}\vee S^{2}. 
  \enddiagram\] 
This implies that $\theta$ and $\theta'$, and $\phi$ and $\phi'$, are compatible, 
implying the homotopy commutative diagram asserted by the lemma.  
\end{proof} 

Using $\partial_{\bar{k}}^{S}$ to also denote the composite 
\(\nameddright{U(n)}{\partial_{\bar{k}}^{S}}{\mapstar_{\bar{k}}(S^{2},BU(n)}{\simeq} 
     {\mapstar_{0}(S^{2},BU(n)}\), 
and similarly for $\partial_{k}^{P}$, by Lemma~\ref{matchingequivs} the left square 
in~(\ref{SPnat}) may be replaced with a homotopy commutative square 
\begin{equation} 
  \label{partialzero} 
  \diagram 
         U(n)\rto^-{\partial_{\bar{k}}^{S}}\ddouble & \mapstar_{0}(S^{2},BU(n))\dto^{\pi^{\ast}} \\ 
         U(n)\rto^-{\partial_{k}^{P}} & \mapstar_{0}(P^{2}(p),BU(n)).  
   \enddiagram 
\end{equation}     
By~(\ref{SPnat}), the homotopy fibres of $\partial_{\bar{k}}^{S}$ and $\partial_{k}^{P}$ 
are $\gbark(S^{2})$ and $\gk(P^{2}(p))$ respectively. 

We next identify certain self-homotopy equivalences of $\mapstar_{k}(P^{2}(p),BU(n))$. 
Since $P^{2}(p)$ is not a co-$H$-space it is not immediately clear that it has 
a degree $d$ map for any integer $d$. However, we may define an analogue as follows. 
The degree $p$ map on $S^{1}$ commutes with the degree $d$ map for any $d$, 
so we obtain a cofibration diagram 
\begin{equation} 
  \label{P2q} 
  \diagram 
      S^{1}\rto^-{p}\dto^{d} & S^{1}\rto\dto^{d} & P^{2}(p)\rto^-{\pi}\dto^{\underline{d}} & S^{2}\dto^{d} \\ 
      S^{1}\rto^-{p} & S^{1}\rto & P^{2}(p)\rto^-{\pi} & S^{2} 
  \enddiagram 
\end{equation} 
for some map $\underline{d}$. The cofibration diagram implies that, upon taking 
integral homology, $(\underline{d})_{\ast}$ is multiplication by $d$ on 
$H_{1}(P^{2}(p);\mathbb{Z})\cong\zmodp$. Suspending, since $\Sigma P^{n}(p)\simeq P^{n+1}(p)$, 
we see that for any $m\geq 1$ 
\(\namedright{P^{m+2}(p)}{\Sigma^{m}\underline{d}}{P^{m+2}(p)}\) 
induces multiplication by $d$ in $H_{m+1}(P^{t+2}(p);\mathbb{Z})\cong\zmodp$. 
Consequently, we obtain the following. 

\begin{lemma} 
   \label{suspd} 
   If $d$ is a unit in $\zmodp$ then for any $m\geq 1$ the map 
   \(\namedright{P^{m+2}(p)}{\Sigma^{m}\underline{d}}{P^{m+2}(p)}\) 
   is a homotopy equivalence. 
\end{lemma}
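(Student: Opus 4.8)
The plan is to use the fact that a self-map of a mod-$p$ Moore space inducing an isomorphism on integral homology must be a homotopy equivalence, provided the Moore space is simply-connected so that the homology Whitehead theorem applies. The map $\Sigma^{m}\underline{d}$ is a self-map of $P^{m+2}(p)$, which is simply-connected once $m\geq 1$, so I would first verify that $\Sigma^{m}\underline{d}$ induces an isomorphism on $H_{*}(-;\mathbb{Z})$ in every degree. The only nontrivial reduced integral homology group of $P^{m+2}(p)$ is $H_{m+1}(P^{m+2}(p);\mathbb{Z})\cong\zmodp$, and the discussion preceding the lemma already records that $\Sigma^{m}\underline{d}$ acts there as multiplication by $d$. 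If $d$ is a unit in $\zmodp$, then multiplication by $d$ on $\zmodp$ is an isomorphism, so $\Sigma^{m}\underline{d}$ is a homology isomorphism.

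Next I would invoke the homology Whitehead theorem: a map between simply-connected $CW$-complexes that induces an isomorphism on integral homology in all degrees is a homotopy equivalence. Since both source and target are $P^{m+2}(p)$, which is a simply-connected $CW$-complex for $m\geq 1$, and since $\Sigma^{m}\underline{d}$ is a homology isomorphism by the previous step, the theorem applies and gives that $\Sigma^{m}\underline{d}$ is a homotopy equivalence. This completes the proof.

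There is essentially no obstacle here: the lemma is a direct application of the homology Whitehead theorem combined with the homology computation already carried out in constructing~(\ref{P2q}). The only point requiring any care is the simple-connectivity hypothesis, which is why the statement restricts to $m\geq 1$ (so that the space is $P^{m+2}(p)$ with $m+2\geq 3$, hence simply-connected). One could alternatively argue via the universal coefficient theorem that $\Sigma^{m}\underline{d}$ also induces isomorphisms on $H_{*}(-;\mathbb{F}_{p})$ and hence, being a map of simply-connected finite complexes, is a homotopy equivalence, but the integral statement is cleanest. I would present the proof in two sentences: identify the action on the unique nontrivial integral homology group, then cite Whitehead.
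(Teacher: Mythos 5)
Your proposal is correct and follows essentially the same argument as the paper: identify that $\Sigma^{m}\underline{d}$ acts as multiplication by $d$ on the single nontrivial reduced integral homology group $H_{m+1}(P^{m+2}(p);\mathbb{Z})\cong\zmodp$, conclude it is a homology isomorphism since $d$ is a unit, and apply the homology Whitehead theorem using simple-connectivity for $m\geq 1$. No gaps.
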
 

\begin{proof} 
Since $d$ is a unit in $\zmodp$, the map $\underline{d}$ induces an isomorphism 
in $H_{m+1}(P^{m+2}(p);\mathbb{Z})$ and hence induces an isomorphism on 
$H_{\ast}(P^{m+2}(p);\mathbb{Z})$. Since $m\geq 1$, $P^{m+2}(p)$ is simply-connected, 
so Whitehead's Theorem implies that $\underline{d}$ is a homotopy equivalence. 
\end{proof} 

For any $d\in\mathbb{Z}$, the map $\underline{d}$ induces a map 
\[\underline{d}^{\ast}\colon\namedright{\mapstar_{0}(P^{2}(p),BU(n))}{}{\mapstar_{0}(P^{2}(p),BU(n))}.\] 

\begin{lemma} 
   \label{unit} 
   If $d$ is a unit in \zmodp\ then $\underline{d}^{\ast}$ is a homotopy equivalence. 
\end{lemma}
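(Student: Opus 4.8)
The plan is to show $\underline{d}^{\ast}$ is a homotopy equivalence on $\mapstar_{0}(P^{2}(p),BU(n))$ by relating it to a statement about the target $BU(n)$ rather than the source $P^2(p)$. The obvious obstruction is that $\underline{d}$ is not itself a homotopy equivalence when $d$ is merely a unit mod $p$ (as opposed to $\pm 1$), so one cannot simply precompose with an inverse. Instead, I would exploit the fact that $\mapstar_{0}(P^{2}(p),BU(n))\simeq\Omega^{?}$-type data controlled by the Moore-space structure, together with the coaction $\psi$ introduced in the proof of Lemma~\ref{matchingequivs}, which gives the set of components $[P^{2}(p),BU(n)]\cong\mathbb{Z}/p\mathbb{Z}$ an abelian group structure and gives each component a principal-bundle (translation) structure over the basepoint component.

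The key steps, in order, are as follows. First, I would use Lemma~\ref{suspd}: although $\underline{d}$ itself need not be an equivalence, its single suspension $\Sigma\underline{d}\colon P^{3}(p)\to P^{3}(p)$ is a homotopy equivalence whenever $d$ is a unit in $\mathbb{Z}/p\mathbb{Z}$. Second, I would observe that the evaluation fibration and the adjunction reduce the study of $\underline{d}^{\ast}$ on the pointed mapping space to understanding $\underline{d}$ on the suspension level: precomposition by $\underline{d}$ on $\mapstar_{0}(P^{2}(p),BU(n))$ is, after looping once (or passing to the pointed mapping space out of $\Sigma P^{1}(p)=P^{2}(p)$), governed by $(\Sigma\underline{d})^{\ast}$ on $\mapstar_{0}(P^{3}(p),B)$-type objects, or more directly by noting that $\mapstar_{0}(P^{2}(p),BU(n))$ sits in a fibration whose base and fibre are mapping spaces out of spheres $S^{1}$ and $S^{2}$, on which $\underline{d}$ restricts to honest degree-$d$ maps of spheres and hence induces equivalences (degree $d$ is a unit mod $p$, but on $[S^2,BU(n)]\cong\mathbb Z$ this is multiplication by $d$, which is \emph{not} an equivalence — so this naive route fails and must be replaced). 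Third, therefore the correct argument is: consider the cofibration $S^1\xrightarrow{p}S^1\to P^2(p)\xrightarrow{\pi}S^2\xrightarrow{p}S^2$ and the induced fibration of pointed mapping spaces
\[\namedright{\mapstar_{0}(S^{2},BU(n))}{\pi^{\ast}}{\mapstar_{0}(P^{2}(p),BU(n))}\]
with fibre $\mapstar(S^{1},BU(n))\simeq\Omega^{2}BU(n)$; the map $\underline{d}^{\ast}$ acts on the total space covering the action of the degree-$d$ maps on source and fibre. On the fibre $\Omega^2 BU(n)$ the action is by $d$, on $\pi_0$ of the base it is multiplication by $d$; but $\pi^*$ identifies these two $\mathbb Z$'s with the quotient $\mathbb Z/p$ on which $d$ \emph{is} invertible, so on $\mathbb Z/p$-level $\underline d^*$ is a bijection of components, and a Five-Lemma/long-exact-sequence argument on homotopy groups (where $\underline d$ acts by the unit $d$ on the relevant $H_*(P^2(p);\mathbb Z/p)$ and hence invertibly on homotopy with appropriate coefficients) forces $\underline d^*$ to be a weak equivalence.

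The main obstacle, and the step requiring care, is precisely this bookkeeping: $\underline{d}$ acts on $H_1(P^2(p);\mathbb Z)\cong\mathbb Z/p$ invertibly but does \emph{not} act invertibly on the unreduced integral homology of the relevant sphere-mapping-space pieces, so one must either work with $\mathbb Z/p$-coefficients throughout or exploit that the \emph{component} $\mapstar_0$ only sees the part of $\pi_*$ where the obstruction classes live in $\mathbb Z/p$-modules. Concretely, I expect to argue that $\pi_j(\mapstar_0(P^2(p),BU(n)))\cong [P^{j+2}(p)\text{-built space},BU(n)]$ type groups, which are extensions built from $\pi_{j+2}(BU(n))\otimes\mathbb Z/p$ and torsion terms, on all of which $\underline d$ acts as the unit $d$; then $\underline d^*$ is an isomorphism on all homotopy groups of the component, and since these are $CW$ complexes (or at least have the homotopy type thereof) Whitehead's theorem finishes the proof. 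The cleanest writeup will probably avoid explicit homotopy-group computations by instead factoring $\underline{d}^{\ast}$ through the equivalence $(\Sigma\underline d)^*$ after one suspension/looping and checking the $\pi_0$ statement by hand using Lemma~\ref{bundles}.
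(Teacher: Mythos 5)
Your final argument---identifying $\pi_{m}(\mapstar_{0}(P^{2}(p),BU(n)))$ with $[P^{m+2}(p),BU(n)]$ via the pointed exponential law, noting that $\underline{d}^{\ast}$ acts there as $(\Sigma^{m}\underline{d})^{\ast}$, which is an isomorphism because $\Sigma^{m}\underline{d}$ is a homotopy equivalence by Lemma~\ref{suspd}, and then using the $CW$-structure of the mapping space and Whitehead's theorem---is exactly the paper's proof. The detours through the evaluation fibration and the $\mathbb{Z}/p$-coefficient five-lemma bookkeeping (including the slip identifying $\mapstar(S^{1},BU(n))$ with $\Omega^{2}BU(n)$ rather than $\Omega BU(n)$, and mixing up fibre and base) are unnecessary, since the clean route you settle on at the end is the one the paper takes.
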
 

\begin{proof} 
The first step is to show that $(\underline{d})^{\ast}$ induces an isomorphism on homotopy 
groups. Note that $\mapstar_{0}(P^{2}(p),BU(n))$ is path-connected since it is one component 
of $\mapstar(P^{2}(p),BU(n))$ and for all~$m\geq 1$ we have 
$\pi_{m}(\mapstar_{0}(P^{2}(p),BU(n))\cong\pi_{m}(\mapstar(P^{2}(p),BU(n))$. 
By the pointed Exponential Law, 
$\pi_{m}(\mapstar(P^{2}(p),BU(n))\cong\mapstar(P^{m+2}(p),BU(n))$. 
The effect of $\underline{d}^{\ast}$ on $\pi_{m}$ is therefore determined by applying 
$\mapstar(\ \ ,BU(n))$ to the map 
\(\namedright{P^{m+2}(p)}{\Sigma^{m}\underline{d}}{P^{m+2}(p)}\). 
Since $d$ is a unit in~$\zmodp$, by Lemma~\ref{suspd}, $\Sigma^{m}\underline{d}$ is 
a homotopy equivalence. Thus $\underline{d}^{\ast}$ induces an isomorphism 
on $\pi_{m}$. As this is true for all $m\geq 1$, $\underline{d}^{\ast}$ is a weak homotopy 
equivalence. 

Observe that $P^{2}(p)$ is a $CW$-complex and $BU(n)$ may be given the structure of 
a $CW$-complex. Doing so, by~\cite[Theorem 3]{Mi}, $\mapstar(P^{2}(p),BU(n)$ is also 
a $CW$-complex, and hence so is its component $\mapstar_{0}(P^{2}(p),BU(n))$. By 
Whitehead's Theorem, a weak homotopy equivalence between $CW$-complexes is a 
homotopy equivalence. 
\end{proof} 

Recall that, by elementary number theory, if $(u,n)=1$ then $u$ is a unit 
mod $n$, and if $(k,n)=(\ell,n)$ then $k\equiv u\ell\bmod{n}$ for some integer $u$ 
satisfying $(u,n)=1$.  

\begin{lemma} 
   \label{uklsphere} 
   Suppose that $(k,n)=(\ell,n)$, implying that $k\equiv u\ell\bmod{n}$ for some integer 
   satisfying $(u,n)=1$. Then there is a homotopy commutative diagram 
   \[\diagram 
        U(n)\rto^-{\partial^{S}_{\ell}}\ddouble & \Omega_{0} U(n)\dto^{u} \\ 
        U(n)\rto^-{\partial^{S}_{k}} & \Omega_{0} U(n).  
     \enddiagram\] 
\end{lemma}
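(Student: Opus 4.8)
The plan is to reduce the statement to two facts about $U(n)$-gauge groups over $S^{2}$ supplied by \cite{S}: that the connecting map is additive in the first Chern class, and that $\partial^{S}_{1}$ is annihilated by $n$. Throughout, use the identification $\mapstar_{0}(S^{2},BU(n))\simeq\Omega^{2}_{0}BU(n)=\Omega_{0}U(n)$; since the target is a loop space, $[U(n),\Omega_{0}U(n)]$ is a group, and for any integer $m$ post-composition with the $m$-fold multiplication map (well defined up to homotopy by homotopy associativity of the loop sum) induces multiplication by $m$ on $[U(n),\Omega_{0}U(n)]$. In particular the vertical map $u$ in the asserted diagram induces multiplication by $u$ on homotopy classes.

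First I would record from \cite{S} that for every integer $m$ there is a homotopy $\partial^{S}_{m}\simeq m\circ\partial^{S}_{1}$, that is, $[\partial^{S}_{m}]=m[\partial^{S}_{1}]$ in $[U(n),\Omega_{0}U(n)]$, and that $[\partial^{S}_{1}]$ has order dividing $n$; consequently $[\partial^{S}_{m}]$ depends only on the residue of $m$ modulo $n$. If \cite{S} does not phrase matters in exactly this way, both facts can be recovered by identifying $\partial^{S}_{1}$ with the Samelson product $\langle\chi,\mathrm{id}_{U(n)}\rangle$ for $\chi$ a generator of $\pi_{1}(U(n))$ — bilinearity of the Samelson product gives the additivity, and the classical computation of this product gives the order bound.

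Granting this, the lemma follows formally. By hypothesis $(k,n)=(\ell,n)$, so by the elementary number theory recalled above there is an integer $u$ with $(u,n)=1$ and $k\equiv u\ell\bmod n$. Then
\[[\,u\circ\partial^{S}_{\ell}\,]\;=\;u\,[\partial^{S}_{\ell}]\;=\;u\ell\,[\partial^{S}_{1}]\;=\;k\,[\partial^{S}_{1}]\;=\;[\partial^{S}_{k}]\]
in $[U(n),\Omega_{0}U(n)]$: the first equality is the effect of post-composition with the $u$-fold multiplication, the second and fourth are the additivity of $\partial^{S}_{(-)}$, and the third holds because $k\equiv u\ell\bmod n$ while $n[\partial^{S}_{1}]=0$. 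Hence $u\circ\partial^{S}_{\ell}\simeq\partial^{S}_{k}$, which is exactly the asserted homotopy commutative square, the left vertical arrow being the identity.

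The step I expect to be the real work is the first one: extracting from \cite{S} — and matching up with the component-shifting equivalences used there — the two statements $\partial^{S}_{m}\simeq m\circ\partial^{S}_{1}$ and $n\circ\partial^{S}_{1}\simeq\ast$. Once these are in hand the argument is a routine manipulation in the abelian group $[U(n),\Omega_{0}U(n)]$.
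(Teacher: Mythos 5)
Your argument is correct and is essentially the paper's own proof: both rest on the linearity $\partial^{S}_{m}\simeq m\circ\partial^{S}_{1}$ (the paper cites Lang~\cite{L} for this) and the fact that $\partial^{S}_{1}$ has order $n$ in $[U(n),\Omega_{0}U(n)]$ (the paper cites~\cite{Th}, refining~\cite{S}), followed by the same computation $u\ell\circ\partial^{S}_{1}\simeq k\circ\partial^{S}_{1}$. The only difference is in which references you would pull the two key facts from, which does not affect the substance.
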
 

\begin{proof} 
In~\cite{Th}, refining work in~\cite{S}, it was shown that 
\(\namedright{U(n)}{\partial^{S}_{1}}{\Omega_{0} U(n)}\) 
has order~$n$. Therefore, $\partial^{S}_{1}$ generates a cyclic subgroup of 
order $n$ in the group $[U(n),\Omega_{0} U(n)]$. By~\cite{L}, 
$\partial^{S}_{\ell}\simeq\ell\circ\partial^{S}_{1}$ and 
$\partial^{S}_{k}\simeq k\circ\partial^{S}_{1}$. Therefore the hypothesis 
that $k\equiv u\ell\bmod{n}$ implies that $u\ell\circ\partial^{S}_{1}\simeq k\circ\partial^{S}_{1}$, 
that is, $u\circ\partial^{S}_{\ell}\simeq\partial^{S}_{k}$. 
\end{proof} 

\begin{lemma} 
   \label{uklMoore} 
   Suppose that $(k,n)=(\ell,n)$, implying that $k\equiv u\ell\bmod{n}$ for some integer 
   satisfying $(u,n)=1$. Suppose that $(u,p)=1$ as well. Then there is a 
   homotopy commutative diagram 
   \[\diagram 
        U(n)\rto^-{\partial^{P}_{\ell}}\ddouble & \mapstar_{0}(P^{2}(p),BU(n))\dto^{\underline{u}^{\ast}} \\ 
        U(n)\rto^-{\partial^{P}_{k}} & \mapstar_{0}(P^{2}(p),BU(n))  
     \enddiagram\] 
   where $\underline{u}^{\ast}$ is a homotopy equivalence. 
\end{lemma}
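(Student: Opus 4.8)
The plan is to obtain the square of Lemma~\ref{uklMoore} from three pieces already in hand: the homotopy commutative square~(\ref{partialzero}), which relates $\partial^{S}_{\bar{k}}$ and $\partial^{P}_{k}$ through $\pi^{\ast}$; the right-hand square of the cofibration diagram~(\ref{P2q}), which relates $\underline{d}$ to the degree~$d$ self-map of $S^{2}$; and Lemma~\ref{uklsphere}, which relates $\partial^{S}_{\ell}$ and $\partial^{S}_{k}$. The last assertion of the lemma is immediate: since $(u,p)=1$ the integer $u$ is a unit in $\zmodp$, so Lemma~\ref{unit} shows that $\underline{u}^{\ast}$ is a homotopy equivalence. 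It then remains to prove that the square commutes up to homotopy.

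First I would feed $d=u$ into~(\ref{P2q}). Its right-hand square asserts that $\pi\circ\underline{u}\simeq u\circ\pi$ as maps $P^{2}(p)\to S^{2}$, where $u\colon S^{2}\to S^{2}$ is the degree~$u$ map. Applying $\mapstar(-,BU(n))$ yields $\underline{u}^{\ast}\circ\pi^{\ast}\simeq\pi^{\ast}\circ u^{\ast}$, where $u^{\ast}$ is precomposition with the degree~$u$ map. Since $u^{\ast}$ acts on $[S^{2},BU(n)]\cong\mathbb{Z}$ and $\underline{u}^{\ast}$ acts on $[P^{2}(p),BU(n)]\cong\zmodp$ as multiplication by $u$, both fix the zero component, so on restricting we obtain a homotopy commutative square
\[\diagram
\mapstar_{0}(S^{2},BU(n))\rto^-{u^{\ast}}\dto^{\pi^{\ast}} & \mapstar_{0}(S^{2},BU(n))\dto^{\pi^{\ast}} \\
\mapstar_{0}(P^{2}(p),BU(n))\rto^-{\underline{u}^{\ast}} & \mapstar_{0}(P^{2}(p),BU(n)).
\enddiagram\]
The step requiring care is the identification of $u^{\ast}$ with the self-map ``$u$'' of Lemma~\ref{uklsphere}: under the standard equivalences $\mapstar_{0}(S^{2},BU(n))\simeq\Omega^{2}_{0}BU(n)\simeq\Omega_{0}U(n)$, precomposition with a degree~$u$ self-map of $S^{2}$ agrees with the $u$-th power map on this loop space (writing the degree~$u$ map through the iterated comultiplication and fold map of $S^{2}$ exhibits it as the $u$-fold loop sum), and this $u$-th power map is exactly what enters the formula $\partial^{S}_{\ell}\simeq\ell\circ\partial^{S}_{1}$ used in the proof of Lemma~\ref{uklsphere}.

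With these assembled, the proof finishes by a diagram chase. Taking the integer $\ell$ as its own reduction mod~$p$, square~(\ref{partialzero}) gives $\partial^{P}_{\ell}\simeq\pi^{\ast}\circ\partial^{S}_{\ell}$, so $\underline{u}^{\ast}\circ\partial^{P}_{\ell}\simeq\underline{u}^{\ast}\circ\pi^{\ast}\circ\partial^{S}_{\ell}$. By the square just established this equals $\pi^{\ast}\circ u^{\ast}\circ\partial^{S}_{\ell}$, which by Lemma~\ref{uklsphere} (applicable because $(k,n)=(\ell,n)$ and $k\equiv u\ell\bmod{n}$) equals $\pi^{\ast}\circ\partial^{S}_{k}$, and a second application of~(\ref{partialzero}) identifies this with $\partial^{P}_{k}$. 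Hence $\underline{u}^{\ast}\circ\partial^{P}_{\ell}\simeq\partial^{P}_{k}$, which is the asserted homotopy commutativity.

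I expect the main obstacle to be precisely the identification flagged above: making sure that precomposition by a degree~$u$ self-map of $S^{2}$ corresponds, after the adjunctions $\mapstar_{0}(S^{2},BU(n))\simeq\Omega^{2}_{0}BU(n)\simeq\Omega_{0}U(n)$, to the $u$-th power map appearing in the formula $\partial^{S}_{k}\simeq k\circ\partial^{S}_{1}$. Everything else is a formal splicing of~(\ref{partialzero}), (\ref{P2q}) and Lemma~\ref{uklsphere}.
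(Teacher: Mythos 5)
Your argument is correct and is essentially the paper's own proof: the paper likewise splices Lemma~\ref{uklsphere} (the left square) with the square obtained by applying $\mapstar(\ \ ,BU(n))$ to the right square of~(\ref{P2q}) (the right square), notes the rows compose to $\partial^{P}_{\ell}$ and $\partial^{P}_{k}$ via~(\ref{partialzero}), and invokes Lemma~\ref{unit} for the equivalence. The only difference is that you make explicit the identification of precomposition by the degree~$u$ self-map of $S^{2}$ with the $u$-th power map on $\Omega_{0}U(n)$, a point the paper leaves implicit; your justification of it is correct.
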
 

\begin{proof} 
Consider the diagram 
\[\diagram 
     U(n)\rto^-{\partial^{S}_{\ell}}\ddouble & \Omega_{0} U(n)\rto^-{\pi^{\ast}}\dto^{u} 
            & \mapstar_{0}(P^{2}(p),BU(n))\dto^{\underline{u}^{\ast}} \\ 
     U(n)\rto^-{\partial^{S}_{k}} & \Omega_{0} U(n)\rto^-{\pi^{\ast}} & \mapstar_{0}(P^{2}(p),BU(n)).   
  \enddiagram\] 
The left square homotopy commutes by Lemma~\ref{uklsphere} and 
the right square homotopy commutes by applying $\mapstar(\ \ ,BU(n))$ to 
the right square in~(\ref{P2q}). Observe that the composite along the top row 
is $\partial^{P}_{\ell}$ and the composite along the bottom row is $\partial^{P}_{k}$. 
Thus $\underline{u}^{\ast}\circ\partial^{P}_{\ell}\simeq\partial^{P}_{k}$. Finally, 
the hypothesis $(u,p)=1$ implies that $u$ is a unit mod-$p$, so by Lemma~\ref{unit}, 
$\underline{u}^{\ast}$ is a homotopy equivalence. 
\end{proof} 

Note that the condition $(u,n)=1$ and $(u,p)=1$ is equivalent to the condition $(u,np)=1$. 

\begin{proposition} 
   \label{Mooregauge} 
   Suppose that $k\equiv u\ell\bmod{n}$  where $(u,np)=1$. Then  
   $\gk(P^{2}(p))\simeq\g_{\ell}(P^{2}(p))$. 
\end{proposition}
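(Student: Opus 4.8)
The plan is to deduce the homotopy equivalence $\gk(P^{2}(p))\simeq\g_{\ell}(P^{2}(p))$ by comparing the homotopy fibres of the connecting maps $\partial^{P}_{k}$ and $\partial^{P}_{\ell}$, using the homotopy commutative square established in Lemma~\ref{uklMoore}. Recall that, as noted just before the statement, the hypothesis $(u,np)=1$ is equivalent to $(u,n)=1$ together with $(u,p)=1$; and since $(u,n)=1$ forces $(k,n)=(u\ell,n)=(\ell,n)$, the congruence $k\equiv u\ell\bmod n$ puts us exactly in the situation of Lemma~\ref{uklMoore}.

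First I would invoke Lemma~\ref{uklMoore} to obtain the homotopy commutative square
\[\diagram
     U(n)\rto^-{\partial^{P}_{\ell}}\ddouble & \mapstar_{0}(P^{2}(p),BU(n))\dto^{\underline{u}^{\ast}} \\
     U(n)\rto^-{\partial^{P}_{k}} & \mapstar_{0}(P^{2}(p),BU(n))
\enddiagram\]
in which the right-hand vertical map $\underline{u}^{\ast}$ is a homotopy equivalence and the left-hand vertical map is the identity on $U(n)$. Since $\partial^{P}_{\ell}$ and $\partial^{P}_{k}$ extend to homotopy fibration sequences whose fibres are $\g_{\ell}(P^{2}(p))$ and $\gk(P^{2}(p))$ respectively (as recorded after~(\ref{partialzero}); note we are using the models for $\partial^{P}_{\ell}$ and $\partial^{P}_{k}$ with target $\mapstar_{0}(P^{2}(p),BU(n))$ coming from Lemma~\ref{matchingequivs}), a commuting square of this shape induces a map of fibration sequences. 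Explicitly, there is an induced map on homotopy fibres fitting into a commutative ladder, and the five lemma applied to the long exact sequences of homotopy groups — using that the identity on $U(n)$ and the homotopy equivalence $\underline{u}^{\ast}$ both induce isomorphisms on all homotopy groups — shows the induced map $\gk(P^{2}(p))\to\g_{\ell}(P^{2}(p))$ is a weak homotopy equivalence. Since both gauge groups have the homotopy type of $CW$-complexes (being loop spaces of $CW$-complexes, or by the same Milnor-type argument used in Lemma~\ref{unit}), Whitehead's Theorem upgrades this to a genuine homotopy equivalence.

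The one point requiring a little care, and the step I expect to be the main obstacle, is the passage from a homotopy commutative square to an actual map of homotopy fibration sequences: homotopy commutativity of the square alone does not automatically produce a map of fibrations, so I would either strictify the square up to homotopy (replacing maps by fibrations) and then observe that the resulting map of fibres is well-defined up to homotopy, or — more cheaply — simply note that the square of connecting maps gives a homotopy commutative square of classifying-space-level data and invoke the standard fact that a homotopy commutative square with both verticals homotopy equivalences induces a homotopy equivalence on homotopy fibres of the horizontal maps. Either route is routine; the substance of the proposition is entirely carried by Lemma~\ref{uklMoore}, and the remaining work is the formal fibre-comparison argument together with the Whitehead-theorem bookkeeping.
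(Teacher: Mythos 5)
Your proposal is correct and follows essentially the same route as the paper: the paper's proof likewise takes homotopy fibres in the commutative square of Lemma~\ref{uklMoore} and concludes that the induced map of fibres is a homotopy equivalence because $\underline{u}^{\ast}$ is. Your extra care about strictifying the homotopy commutative square and the Whitehead/CW bookkeeping is reasonable but the paper treats this as routine, exactly as you anticipated.
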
 

\begin{proof} 
The homotopy fibres of $\partial^{P}_{k}$ and $\partial^{P}_{\ell}$ are $\gk(P^{2}(p))$ 
and $\g_{\ell}(P^{2}(p))$ respectively. Taking homotopy fibres in the homotopy commutative  
diagram in the statement of Lemma~\ref{uklMoore} gives an induced map of fibres 
\(\varphi\colon\namedright{\g_{\ell}(P^{2}(p))}{}{\gk(P^{2}(p))}\). 
The fact that $\underline{u}_{\ast}$ is a homotopy equivalence implies that~$\varphi$ 
is a homotopy equivalence. 
\end{proof} 

There is a partial converse to Proposition~\ref{Mooregauge} in a limited number of cases. 

\begin{lemma} 
   \label{gkhomotopygroups} 
   Suppose that $n=\mathfrak{p}$ where $\mathfrak{p}$ is a prime, that $\mathfrak{p}$ 
   divides $p$ and $(p!,\mathfrak{p})\neq p/\mathfrak{p}$. 
   If $\gk(P^{2}(p))\simeq\mathcal{G}_{\ell}(P^{2}(p))$ then $(k,\mathfrak{p})= (\ell,\mathfrak{p})$. 
\end{lemma}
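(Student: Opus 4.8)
The plan is to contrapositive the statement: I will assume $(k,\mathfrak{p})\neq(\ell,\mathfrak{p})$ and show $\gk(P^2(p))\not\simeq\g_\ell(P^2(p))$ by exhibiting a homotopy group in which they differ. Since $n=\mathfrak{p}$ is prime, the only possibilities are that one of $(k,\mathfrak{p}),(\ell,\mathfrak{p})$ equals $1$ and the other equals $\mathfrak{p}$; so without loss of generality I may take $\ell$ with $(\ell,\mathfrak{p})=\mathfrak{p}$, i.e.\ $\mathfrak{p}\mid\ell$, and $k$ with $(k,\mathfrak{p})=1$. The idea is that $\g_\ell(P^2(p))$ with $\mathfrak{p}\mid\ell$ should behave like the component of a \emph{trivial} bundle (or at least like $\mathcal{G}_0$), whereas $\gk(P^2(p))$ with $(k,\mathfrak{p})=1$ should not, and this difference should be detectable in low-dimensional homotopy groups via the connecting map $\partial^P$.

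First I would recall from Lemma~\ref{uklsphere} and~\cite{Th} that $\partial^S_1\colon\namedright{U(n)}{}{\Omega_0 U(n)}$ has order exactly $n=\mathfrak{p}$ in $[U(n),\Omega_0 U(n)]$, and that $\partial^S_m\simeq m\circ\partial^S_1$; hence $\partial^S_m$ is null homotopic precisely when $\mathfrak{p}\mid m$. Next, using the factorization $\partial^P_m\simeq\pi^\ast\circ\partial^S_m$ established in the proof of Lemma~\ref{uklMoore} (from the right square of~(\ref{P2q}) together with~(\ref{partialzero})), I would analyze the effect of $\partial^P_m$ on homotopy groups. Taking homotopy fibres of $\partial^P_m$ gives $\g_m(P^2(p))$, so there is a long exact sequence relating $\pi_*(\g_m(P^2(p)))$, $\pi_*(U(n))$ and $\pi_*(\mapstar_0(P^2(p),BU(n)))$, in which the relevant boundary homomorphisms are induced by $\partial^P_m=\pi^\ast\circ\partial^S_m$.

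The key computation is then to identify, for a suitable small value of $m'$ with $\pi_{m'}(U(\mathfrak{p}))$ and $\pi_{m'}(\mapstar_0(P^2(p),BU(\mathfrak{p})))\cong[\,P^{m'+2}(p),BU(\mathfrak{p})\,]$ both understood, the induced map $(\partial^P_\ell)_*$ versus $(\partial^P_k)_*$. When $\mathfrak{p}\mid\ell$ we have $\partial^S_\ell\simeq\ast$ hence $\partial^P_\ell\simeq\ast$, so $\g_\ell(P^2(p))\simeq U(\mathfrak{p})\times\Omega\mapstar_0(P^2(p),BU(\mathfrak{p}))$ and its homotopy groups split as a direct sum; when $(k,\mathfrak{p})=1$, $\partial^S_k$ is a generator of a $\mathbb{Z}/\mathfrak{p}$, and I must show $\pi^\ast$ does not kill its image in the degree I have chosen — this is where the numerical hypothesis $(p!,\mathfrak{p})\neq p/\mathfrak{p}$ enters, as it controls the $\mathfrak{p}$-primary torsion coming from $\pi_*(U(\mathfrak{p}))$ (the denominators of the relevant $\pi_*(SU)$-type groups involve factorials) and prevents the nontrivial image of $\partial^S_k$ from being absorbed after applying $\pi^\ast$. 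Concretely, I expect to work in $\pi_{2\mathfrak{p}-2}$ or a nearby degree where $\pi_*(U(\mathfrak{p}))$ has its first interesting $\mathfrak{p}$-torsion, compare the orders of the resulting abelian groups $\pi_{m'}(\g_k)$ and $\pi_{m'}(\g_\ell)$, and conclude they are unequal.

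The main obstacle will be the explicit homotopy-group bookkeeping: pinning down the precise degree $m'$ in which the two gauge groups are distinguished, computing $\pi_{m'}(\mapstar_0(P^2(p),BU(\mathfrak{p})))$ via the cofibration $\namedddright{S^1}{}{P^{m'+2}(p)}{\pi}{S^{m'+2}}{\underline p}{S^{m'+2}}$ together with the known $\mathfrak{p}$-local structure of $\pi_*(BU(\mathfrak{p}))$, and verifying that under these identifications the boundary map $\pi^\ast\circ(\partial^S_k)_*$ is genuinely nonzero exactly when $(p!,\mathfrak{p})\neq p/\mathfrak{p}$ — i.e.\ checking that the hypothesis is not merely sufficient bookkeeping but actually the obstruction that makes the orders differ. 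Extracting the homotopy-equivalence-invariant consequence (unequal orders of some $\pi_{m'}$) from the order computation is routine once that step is done.
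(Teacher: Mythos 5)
Your strategy is the right one and is in fact the paper's: distinguish the two gauge groups by a single homotopy group, namely $\pi_{2\mathfrak{p}-2}(\gk(P^{2}(p)))\cong\pi_{2\mathfrak{p}-1}(B\gk(P^{2}(p)))$, which (since $\pi_{2n-1}(BU(n))\cong 0$) is the cokernel of $(\partial^{P}_{k})_{\ast}$ on $\pi_{2n-1}$, and you have correctly located both the degree ($2\mathfrak{p}-2$, where $\pi_{2\mathfrak{p}}(U(\mathfrak{p}))\cong\mathbb{Z}/\mathfrak{p}!\mathbb{Z}$ first carries $\mathfrak{p}$-torsion) and the point at which the hypothesis $(\mathfrak{p}!,p)\neq p/\mathfrak{p}$ must enter. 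However, the proposal stops exactly where the proof actually lives. The only quantitative input you cite about $\partial^{S}$ is that $\partial^{S}_{1}$ has order $n$ in $[U(n),\Omega_{0}U(n)]$. That tells you $\partial^{S}_{k}$ is essential when $(k,\mathfrak{p})=1$ and null when $\mathfrak{p}\mid k$, but essentiality of a map gives no control whatsoever over the induced homomorphism on $\pi_{2\mathfrak{p}-1}$; a priori $(\partial^{S}_{k})_{\ast}$ could vanish there even for $(k,\mathfrak{p})=1$, and then your comparison collapses. So as written there is a genuine gap: the argument cannot be completed from the order statement alone.

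The missing ingredient is Sutherland's computation in~\cite{S}: the image of $(\partial^{S}_{\bar{k}})_{\ast}$ on $\pi_{2n-1}(\Omega_{0}U(n))\cong\pi_{2n}(U(n))\cong\mathbb{Z}/n!\mathbb{Z}$ is the subgroup generated by $(n-1)!\,(n,\bar{k})\,\gamma$. Combining this with the exact sequence of the cofibration \(\nameddright{P^{2}(p)}{\pi}{S^{2}}{\underline{p}}{S^{2}}\), which gives $\pi_{2n-1}(\mapstar_{0}(P^{2}(p),BU(n)))\cong\mathbb{Z}/(n!,p)\mathbb{Z}$ with $\pi^{\ast}$ equal to reduction mod $(n!,p)$, one finds that for $n=\mathfrak{p}$ and $\mathfrak{p}\mid p$ the cokernel of $(\partial^{P}_{k})_{\ast}$ is $\mathbb{Z}/(p/\mathfrak{p})\mathbb{Z}$ when $(\mathfrak{p},k)=1$ and $\mathbb{Z}/(\mathfrak{p}!,p)\mathbb{Z}$ when $\mathfrak{p}\mid k$; the hypothesis $(\mathfrak{p}!,p)\neq p/\mathfrak{p}$ is precisely the condition that these groups are distinct. (Incidentally, the product splitting $\g_{\ell}(P^{2}(p))\simeq U(\mathfrak{p})\times\Omega\mapstar_{0}(P^{2}(p),BU(\mathfrak{p}))$ you derive from $\partial^{P}_{\ell}\simeq\ast$ is harmless but unnecessary: only the induced maps on the one homotopy group are needed.) With Sutherland's formula inserted, your outline becomes the paper's proof.
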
 

\begin{proof} 
To illustrate why the restrictions on $n$ and $p$ appear, they are introduced  
only when appropriate. For any $n$ and $p$, the homotopy cofibration 
\(\nameddright{P^{2}(p)}{\pi}{S^{2}}{\underline{p}}{S^{2}}\) 
induces an exact sequence 
\[\nameddright{\pi_{2n-1}(\Omega_{0} U(n))}{p}{\pi_{2n-1}(\Omega_{0} U(n))} 
       {\pi^{\ast}}{\pi_{2n-1}(\mapstar_{0}(P^{2}(p),BU(n)))}\] 
\[\hspace{3cm}\longrightarrow\namedright{\pi_{2n-2}(\Omega_{0} U(n))} 
       {p}{\pi_{2n-2}(\Omega_{0} U(n))}.\] 
By~\cite{BH} or~\cite{To}, $\pi_{2n}(U(n))\cong\mathbb{Z}/n!\mathbb{Z}$, 
and it is well known that 
$\pi_{2n-1}(U(n))\cong\mathbb{Z}$. As multiplication by $p$ on 
$\mathbb{Z}/n!\mathbb{Z}$ sends a generator~$\gamma$ to $(n!,p)\gamma$ and 
mutliplication by $p$ on $\mathbb{Z}$ is an injection, we obtain 
$\pi_{2n-1}(\mapstar_{0}(P^{2}(p),BU(n))\cong\mathbb{Z}/(n!,p)\mathbb{Z}$ 
and $\pi^{\ast}$ is reduction mod $(n!,p)$. 

Next, consider the commutative diagram 
\[\spreaddiagramcolumns{-0.5pc}\diagram 
     \pi_{2n-1}(U(n))\rto^-{(\partial_{\bar{k}}^{S})_{\ast}}\ddouble 
          & \pi_{2n-1}(\Omega_{0} U(n))\dto^{\pi^{\ast}} & & \\ 
     \pi_{2n-1}(U(n))\rto^-{(\partial_{k}^{P})_{\ast}} 
          & \pi_{2n-1}(\mapstar_{0}(P^{2}(p),BU(n)))\rto 
          & \pi_{2n-1}(B\gk(P^{2}(p)))\rto & \pi_{2n-1}(BU(n)) 
  \enddiagram\] 
induced by~(\ref{SPnat}), and note that the bottom row is exact. The fact that 
$\pi_{2n-1}(BU(n))\cong 0$ implies that $\pi_{2n-1}(B\gk(P^{2}(p)))$ is isomorphic 
to the cokernel of $(\partial_{k}^{P})_{\ast}$. We wish to identify this cokernel 
in a manner related to $(n,k)$ and then compare to the $(n,\ell)$ case. 

Sutherland~\cite{S} showed that the 
image of~$(\partial_{\bar{k}}^{S})_{\ast}$ is  generated by $(n-1)!(n,\bar{k})\,\gamma$. 
Let $\delta=\pi^{\ast}(\gamma)$. Then the image of $(\partial_{k}^{P})_{\ast}$ is generated 
by $(n-1)!(n,k)\,\delta$. Since $\delta$ generates a cylic group of order $(n!,p)$, 
the only way that $(n-1)!(n,k)\delta$ is not going to be trivial is if $(n!,p)$ has 
factors of $n$ which are not factors of $(n-1)!$ or $(n,k)$. The only way $n$ 
has factors that are not factors of $(n-1)!$ is if~$n$ is a prime $\mathfrak{p}$. So from 
now on assume that $n=\mathfrak{p}$. This leaves two cases: $(n,k)=(\mathfrak{p},k)$ 
is $1$ or $\mathfrak{p}$. 

Notice that now $(n!,p)=(\mathfrak{p}!,p)$. If $(\mathfrak{p},k)=1$ then 
$(n-1)!(n,k)\delta=(\mathfrak{p}-1)!\delta$ 
and $\delta$ generates $\mathbb{Z}/(n!,s)\mathbb{Z}=\mathbb{Z}/(p!,s)\mathbb{Z}$. 
The only way $(\mathfrak{p}-1)!\delta$ is nontrivial is if $\mathfrak{p}$ divides $(\mathfrak{p}!,p)$, 
implying that $\mathfrak{p}$ divides $p$. So from now on assume that $\mathfrak{p}$ 
divides $p$. Now the image of $(\partial^{P}_{k})_{\ast}$ is $(\mathfrak{p}-1)!\delta$ in the group 
$\mathbb{Z}/(\mathfrak{p}!,p)\mathbb{Z}$, and this has cokernel $\mathbb{Z}/(p/\mathfrak{p})\mathbb{Z}$. 
That is, $\pi_{2n-1}(B\gk(P^{2}(p)))\cong\mathbb{Z}/(p/\mathfrak{p})\mathbb{Z}$. 

If $(\mathfrak{p},k)=\mathfrak{p}$ then 
$(n-1)!(n,k)\delta=(\mathfrak{p}-1)!\mathfrak{p}\delta=\mathfrak{p}!\delta$ is trivial in 
$\mathbb{Z}/(\mathfrak{p}!,p)\mathbb{Z}$, implying that $(\partial^{P}_{k})_{\ast}$ is trivial, 
implying in turn that $\pi_{2n-1}(B\gk(P^{2}(p)))\cong\mathbb{Z}/(\mathfrak{p}!,p)\mathbb{Z}$. 

Assume that $(\mathfrak{p}!,p)\neq p/\mathfrak{p}$ so the cases $(\mathfrak{p},k)=1$ 
and $(\mathfrak{p},k)=\mathfrak{p}$ result in different groups for $\pi_{2n-1}(B\gk(P^{2}(p)))$. 
The same two options occur for $\pi_{2n-1}(B\g_{\ell}(P^{2}(p)))$. Therefore, if 
$\gk(P^{2}(p))\simeq\g_{\ell}(P^{2}(p))$ then 
$\pi_{2n-1}(B\gk(P^{2}(p)))\cong\pi_{2n-1}(B\g_{\ell}(P^{2}(p)))$, 
implying that $(\mathfrak{p},k)=(\mathfrak{p},\ell)$. 
\end{proof} 

Observe that Proposition~\ref{Mooregauge} and Lemma~\ref{gkhomotopygroups} 
both hold when $n=p=\mathfrak{p}$, noting in Proposition~\ref{Mooregauge} that 
$(u,np)=(u,\mathfrak{p}^{2})=1$ implies that $(u,\mathfrak{p})=1$. Therefore there 
is a complete classification of gauge groups in this case. 

\begin{proposition} 
   \label{completeMoore} 
   For a prime $\mathfrak{p}$, consider the gauge groups of principal $U(\mathfrak{p})$-bundles 
   over $P^{2}(\mathfrak{p})$. Then $\gk(P^{2}(\mathfrak{p}))\simeq\g_{\ell}(P^{2}(\mathfrak{p}))$ 
   if and only if $(\mathfrak{p},k)=(\mathfrak{p},\ell)$.~$\qqed$ 
\end{proposition}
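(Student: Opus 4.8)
The plan is to assemble Proposition~\ref{completeMoore} directly from Proposition~\ref{Mooregauge} and Lemma~\ref{gkhomotopygroups}, which between them already cover both directions once the hypotheses are specialized to $n=p=\mathfrak{p}$. The work is purely bookkeeping: check that the numerical side conditions in each of the two results are automatically satisfied (or vacuous) in this special case.

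First I would establish the forward implication, namely that $(\mathfrak{p},k)=(\mathfrak{p},\ell)$ implies $\gk(P^{2}(\mathfrak{p}))\simeq\g_{\ell}(P^{2}(\mathfrak{p}))$. Since $\mathfrak{p}$ is prime, $(k,\mathfrak{p})=(\ell,\mathfrak{p})$ forces $k\equiv u\ell\bmod{\mathfrak{p}}$ for some integer $u$ with $(u,\mathfrak{p})=1$, by the elementary number theory recalled before Lemma~\ref{uklsphere}. Here $n=\mathfrak{p}$ and $p=\mathfrak{p}$, so $np=\mathfrak{p}^{2}$, and $(u,\mathfrak{p})=1$ is equivalent to $(u,\mathfrak{p}^{2})=1$, i.e.\ $(u,np)=1$. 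Hence the hypothesis of Proposition~\ref{Mooregauge} holds, and that proposition gives the desired homotopy equivalence.

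For the converse I would invoke Lemma~\ref{gkhomotopygroups}, whose hypotheses are that $n=\mathfrak{p}$ is prime, that $\mathfrak{p}\mid p$, and that $(p!,\mathfrak{p})\neq p/\mathfrak{p}$. The first two are immediate when $n=p=\mathfrak{p}$. For the third: $p/\mathfrak{p}=\mathfrak{p}/\mathfrak{p}=1$, while $(p!,\mathfrak{p})=(\mathfrak{p}!,\mathfrak{p})=\mathfrak{p}$ since $\mathfrak{p}$ divides $\mathfrak{p}!$; as $\mathfrak{p}\geq 2$ these are unequal, so the side condition is satisfied. Therefore Lemma~\ref{gkhomotopygroups} applies and yields that $\gk(P^{2}(\mathfrak{p}))\simeq\g_{\ell}(P^{2}(\mathfrak{p}))$ implies $(\mathfrak{p},k)=(\mathfrak{p},\ell)$. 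Combining the two directions completes the proof.

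Since every ingredient is already in hand, I do not anticipate a genuine obstacle; the only thing demanding care is the verification that $(p!,\mathfrak{p})\neq p/\mathfrak{p}$ in the specialized case, which, as above, reduces to the trivial observation $\mathfrak{p}\neq 1$. The remark preceding the proposition in fact already records all of these observations, so the proof is essentially a one-line citation of Proposition~\ref{Mooregauge} and Lemma~\ref{gkhomotopygroups}, which is presumably why the statement is marked $\qqed$ with no separate proof.
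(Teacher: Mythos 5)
Your proposal is correct and matches the paper's own argument exactly: the paper records precisely these observations in the paragraph preceding Proposition~\ref{completeMoore} (that Proposition~\ref{Mooregauge} and Lemma~\ref{gkhomotopygroups} both apply when $n=p=\mathfrak{p}$, with $(u,\mathfrak{p}^{2})=1$ equivalent to $(u,\mathfrak{p})=1$) and therefore states the proposition with no further proof. Your verification of the side condition $(\mathfrak{p}!,\mathfrak{p})=\mathfrak{p}\neq 1=p/\mathfrak{p}$ is the same trivial check the paper leaves implicit.
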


\section{A homotopy decomposition for $\Omega^{2}\gk(L(p,q))$} 
\label{sec:ggLsplitting} 

In this section we prove Theorem~\ref{main}~(a). Consider the homotopy 
cofibration sequence 
\[\namedddright{S^{2}}{f}{P^{2}(p)}{i}{L(p,q)}{g}{S^{3}}.\] 

\begin{lemma} 
   \label{2suspL} 
   There is a homotopy equivalence $\Sigma^{2} L(p,q)\simeq P^{4}(p)\vee S^{5}$. 
\end{lemma}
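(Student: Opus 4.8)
The plan is to show that the attaching map of the top cell of $L(p,q)$ desuspends in a controlled way after two suspensions, and then that the resulting map is null homotopic. Recall from~(\ref{Lpo}) that $L(p,q)$ sits in a homotopy cofibration \(\nameddright{S^{2}}{f}{P^{2}(p)}{i}{L(p,q)}\), so suspending twice gives a homotopy cofibration \(\nameddright{S^{4}}{\Sigma^{2} f}{P^{4}(p)}{}{\Sigma^{2} L(p,q)}\), using $\Sigma^{2} P^{2}(p)\simeq P^{4}(p)$. Thus $\Sigma^{2} L(p,q)$ is the homotopy cofibre of $\Sigma^{2} f\in\pi_{4}(P^{4}(p))$, and the claim $\Sigma^{2} L(p,q)\simeq P^{4}(p)\vee S^{5}$ is equivalent to showing $\Sigma^{2} f\simeq\ast$.

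First I would compute $\pi_{4}(P^{4}(p))$, or at least enough of it to pin down $\Sigma^{2} f$. Applying the homotopy groups functor to the cofibration \(\nameddright{S^{3}}{}{S^{4}}{}{P^{4}(p)}\) coming from $\underline{p}$ (the degree-$p$ map), together with the fact that $P^{4}(p)$ is $3$-connected with $H_{4}(P^{4}(p);\mathbb{Z})\cong\mathbb{Z}/p\mathbb{Z}$, the Hurewicz theorem gives $\pi_{4}(P^{4}(p))\cong\mathbb{Z}/p\mathbb{Z}$, generated by the inclusion of the bottom cell composed appropriately. So $\Sigma^{2} f$ is classified by an element of $\mathbb{Z}/p\mathbb{Z}$, and this element is detected by the connecting homomorphism in homology, i.e. by how the top cell of $\Sigma^{2} L(p,q)$ attaches mod $p$.

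The key point is then the same mechanism used in Lemma~\ref{Lpofacts}: the relevant invariant is the mod-$p$ Bockstein relating the degree-$4$ and degree-$5$ homology of $\Sigma^{2} L(p,q)$. But $\Sigma^{2} L(p,q)$ has the homology of $L(p,q)$ shifted by two, and the integral (and hence mod-$p$) Bockstein structure of $L(p,q)$ is the standard one: the degree-$1$ and degree-$2$ classes are linked by a Bockstein, while the degree-$3$ fundamental class is not the Bockstein of anything (it is a free $\mathbb{Z}$-summand in $H_{3}$, or equivalently $H^{3}(L(p,q);\mathbb{Z})$ is torsion-free of rank one). Shifting up by two, the degree-$5$ class of $\Sigma^{2} L(p,q)$ is not hit by any Bockstein, so the component of $\Sigma^{2} f$ in $\pi_{4}(P^{4}(p))\cong\mathbb{Z}/p\mathbb{Z}$ is zero. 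Hence $\Sigma^{2} f\simeq\ast$ and the cofibration splits as $\Sigma^{2} L(p,q)\simeq P^{4}(p)\vee S^{5}$.

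The main obstacle is making the identification $\pi_{4}(P^{4}(p))\cong\mathbb{Z}/p\mathbb{Z}$ and the claim "$\Sigma^{2} f$ is detected by a Bockstein" fully rigorous, rather than merely plausible by analogy with Lemma~\ref{Lpofacts}; in particular one must check that $\Sigma^{2} f$ lands in the image of the bottom-cell inclusion $S^{4}\hookrightarrow P^{4}(p)$ (which follows since the composite \(\namedright{S^{4}}{\Sigma^{2} f}{P^{4}(p)}{\Sigma^{2}\pi}{S^{4}}\) equals $\Sigma^{2}(\pi\circ f)$, null by Lemma~\ref{Lpofacts}, so $\Sigma^{2} f$ factors through the homotopy fibre of $\Sigma^{2}\pi$, whose low-dimensional homotopy is controlled by $S^{4}$). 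Once $\Sigma^{2} f$ is pinned to the cyclic summand $\mathbb{Z}/p\mathbb{Z}$ generated by $[\iota_{4}]$, the vanishing is exactly the statement that $\Sigma^{2} L(p,q)$ has trivial mod-$p$ Bockstein from degree $4$ to degree $5$, which is inherited from $L(p,q)$ via suspension, completing the argument.
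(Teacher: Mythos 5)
Your reduction of the lemma to showing that $\Sigma^{2}f$ is null homotopic is correct and matches the paper's strategy, but both the group you compute and the detection mechanism you invoke are wrong. With the paper's convention $P^{4}(p)=\Sigma^{2}P^{2}(p)=S^{3}\cup_{p}e^{4}$ is $2$-connected with $H_{3}(P^{4}(p);\mathbb{Z})\cong\mathbb{Z}/p\mathbb{Z}$; it is not $3$-connected with $H_{4}\cong\mathbb{Z}/p\mathbb{Z}$. Hurewicz therefore gives $\pi_{3}(P^{4}(p))\cong\mathbb{Z}/p\mathbb{Z}$, whereas the group that actually contains $\Sigma^{2}f$ is $\pi_{4}(P^{4}(p))$. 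Since everything here is in the stable range, the cofibration $\nameddright{S^{3}}{\underline{p}}{S^{3}}{}{P^{4}(p)}$ yields an exact sequence
$\nameddright{\pi_{4}(S^{3})}{p}{\pi_{4}(S^{3})}{}{\pi_{4}(P^{4}(p))}\longrightarrow\nameddright{\pi_{3}(S^{3})}{p}{\pi_{3}(S^{3})}$,
so $\pi_{4}(P^{4}(p))\cong\mathbb{Z}/(2,p)\mathbb{Z}$, generated by $i_{3}\circ\eta$ where $i_{3}$ is the bottom-cell inclusion. For $p$ odd this group is zero and there is nothing to prove; for $p$ even the one potentially nonzero class $i_{3}\circ\eta$ is invisible to homology of its cofibre with any coefficients: the cofibre of $i_{3}\circ\eta$ has exactly the same integral homology, and hence the same Bocksteins, as $P^{4}(p)\vee S^{5}$, because $i_{3}\circ\eta$ factors through $S^{3}$ and so has degree zero onto the $4$-cell. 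That class is detected by $Sq^{2}\colon H^{3}(\;\cdot\;;\mathbb{Z}/2\mathbb{Z})\to H^{5}(\;\cdot\;;\mathbb{Z}/2\mathbb{Z})$, not by a Bockstein. So your argument establishes nothing precisely when $2\mid p$, which is the case Theorem~\ref{main}(a) actually requires.

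The gap is repairable in the spirit of your argument: Steenrod squares are stable, and $Sq^{2}$ vanishes on $H^{3}(\Sigma^{2}L(p,q);\mathbb{Z}/2\mathbb{Z})$ because it desuspends to $Sq^{2}$ on the degree-one class in $H^{1}(L(p,q);\mathbb{Z}/2\mathbb{Z})$, which is zero by the instability axiom ($Sq^{i}x=0$ for $i>|x|$). This rules out $\Sigma^{2}f\simeq i_{3}\circ\eta$ and forces $\Sigma^{2}f\simeq\ast$. The paper argues differently: a closed orientable $3$-manifold is parallelizable, so by Atiyah's theorem on Thom complexes its top cell splits off stably, i.e.\ $f$ is stably trivial; Freudenthal then shows $\Sigma^{2}f$ already lies in the stable range, hence is null homotopic. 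Either route works, but as written your proof does not.
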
 

\begin{proof} 
Consider the homotopy cofibration 
\[\nameddright{S^{2}}{f}{P^{2}(p)}{}{L(p,q)}.\] 
In general, any closed, orientable $3$-manifold~$M$ is parallelizable so by~\cite{A} 
it has the property that its top cell splits off stably. In our case, as $L(p,q)$ is such a 
manifold, the attaching map~$f$ is stably trivial. As $f$ is in the stable range after 
two suspensions, we have $\Sigma^{2} f$ null homotopic. This implies that there 
is a homotopy equivalence $\Sigma^{2} L(p,q)\simeq P^{4}(p)\vee S^{5}$. 
\end{proof} 

Lemma~\ref{2suspL} implies that the map 
\(\namedright{\Sigma^{2} L(p,q)}{\Sigma^{2} g}{S^{5}}\) 
has a right homotopy inverse. We wish to choose this right homotopy 
inverse carefully; the next lemma does this given a condition on~$p$. 
Recall from Lemma~\ref{Lpofacts} that there is a homotopy cofibration 
\(\nameddright{S^{1}}{j}{L(p,q)}{h}{S^{2}\vee S^{3}}\) 
and $g$ is homotopic to $h$ composed with the map collapsing $S^{2}$ 
to a point. 

\begin{lemma} 
   \label{ginv} 
   Suppose that $2\,|\,p$. Then the map 
   \(\namedright{\Sigma^{2} L(p,q)}{\Sigma^{2} g}{S^{5}}\) 
   has a right homotopy inverse 
   \(r\colon\namedright{S^{5}}{}{}{\Sigma^{2} L(p,q)}\) 
   with the property that the composite 
   \(\nameddright{S^{5}}{r}{\Sigma^{2} L(p,q)}{\Sigma^{2} h}{S^{4}\vee S^{5}}\) 
   is homotopic to the inclusion of the right wedge summand. 
\end{lemma}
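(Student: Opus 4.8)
The plan is to build the desired right homotopy inverse $r$ starting from the $S^{5}$ wedge summand of $\Sigma^{2} L(p,q)$ produced by Lemma~\ref{2suspL}, and then correct it using the extra freedom coming from the hypothesis $2\mid p$. First I would fix the homotopy equivalence $\Sigma^{2} L(p,q)\simeq P^{4}(p)\vee S^{5}$ from Lemma~\ref{2suspL} and let $r_{0}\colon\namedright{S^{5}}{}{\Sigma^{2} L(p,q)}$ be the inclusion of the $S^{5}$ summand; since $\Sigma^{2} g$ collapses $P^{4}(p)$, this $r_{0}$ is already a right homotopy inverse of $\Sigma^{2} g$. The issue is only to arrange the stated behaviour under $\Sigma^{2} h$. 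Recall from the cofibration diagram~(\ref{Lpo}) that $h\colon\namedright{L(p,q)}{}{S^{2}\vee S^{3}}$ and $g$ is $h$ followed by the pinch to $S^{3}$; after two suspensions $\Sigma^{2} h\colon\namedright{\Sigma^{2} L(p,q)}{}{S^{4}\vee S^{5}}$ and $\Sigma^{2} g$ is $\Sigma^{2} h$ followed by the pinch to $S^{5}$. Hence $\Sigma^{2} g\circ r_{0}\simeq 1$ forces the $S^{5}$-component of $\Sigma^{2} h\circ r_{0}$ to be a degree~$1$ map of $S^{5}$, and the only possible discrepancy from the inclusion of the right wedge summand is the $S^{4}$-component, i.e. an element $\xi\in\pi_{5}(S^{4})\cong\zmodtwo$ generated by $\Sigma^{2}\eta$.

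The key step is to kill this obstruction $\xi$ by modifying $r_{0}$. Since $\pi_{5}(S^{4})\cong\zmodtwo$, the correction has order $2$, which is exactly where the hypothesis $2\mid p$ enters: when $2\mid p$ the Moore space $P^{4}(p)$ carries a self-map inducing multiplication by $2$ on its bottom cell that is null after composing with the pinch, or, more directly, $\Sigma^{2} h$ restricted to the $P^{4}(p)$ summand hits $\Sigma^{2}\eta\in\pi_{5}(S^{4})$ because the attaching map of the top cell of $P^{4}(p)$ is the degree-$p$ map and $p$ is even, so $\Sigma^{2}\eta$ factors through $P^{4}(p)$ (equivalently $\eta\circ p\simeq\ast$ relative to the relevant Toda bracket, using $2\eta=0$). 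Concretely, there is a map $t\colon\namedright{S^{5}}{}{P^{4}(p)}\subseteq\Sigma^{2} L(p,q)$ whose composite with the $S^{4}$-component of $\Sigma^{2} h$ realises $\xi$; then $r=r_{0}+t$ (using the co-$H$ structure of $S^{5}$) still satisfies $\Sigma^{2} g\circ r\simeq 1$ because $t$ lands in the summand collapsed by $\Sigma^{2} g$, while the $S^{4}$-component of $\Sigma^{2} h\circ r$ becomes $\xi-\xi=0$ and the $S^{5}$-component is unchanged of degree~$1$. Thus $\Sigma^{2} h\circ r$ is homotopic to the inclusion of the right wedge summand of $S^{4}\vee S^{5}$.

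The main obstacle is verifying that $\Sigma^{2}\eta$ genuinely factors through the copy of $P^{4}(p)$ sitting inside $\Sigma^{2} L(p,q)$ in a way compatible with $\Sigma^{2} h$; this requires tracking the splitting of Lemma~\ref{2suspL} carefully and using that $P^{4}(p)\simeq S^{3}\cup_{p} e^{4}$ together with the relation $2\,\Sigma^{2}\eta=0$ in $\pi_{5}(S^{4})$ so that $\Sigma^{2}\eta$ extends over the degree-$p$ Moore cofibre precisely when $p$ is even. All remaining points — that $r$ is a right inverse of $\Sigma^{2} g$, and that the $S^{5}$-component of $\Sigma^{2} h\circ r$ is a homotopy equivalence of $S^{5}$ — are immediate from the pinch-map compatibility recorded after Lemma~\ref{2suspL} and from the fact that $\pi_{5}(S^{5})\cong\mathbb{Z}$ detects degree.
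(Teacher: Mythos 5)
Your proof is correct, but it takes a genuinely different route from the paper. You work entirely after two suspensions: you take the canonical section $r_{0}$ of $\Sigma^{2}g$ coming from the splitting $\Sigma^{2}L(p,q)\simeq P^{4}(p)\vee S^{5}$, observe (via $\pi_{5}(S^{4}\vee S^{5})\cong\pi_{5}(S^{4})\oplus\pi_{5}(S^{5})$, no Whitehead product terms in this degree) that the only obstruction to $\Sigma^{2}h\circ r_{0}$ being the right-summand inclusion is a class $\xi\in\pi_{5}(S^{4})\cong\zmodtwo$, and then cancel $\xi$ by adding a correction $\Sigma^{2}i\circ t'$ where $t'\colon S^{5}\to P^{4}(p)$ lifts $\Sigma^{2}\eta$ through the pinch map $\Sigma^{2}\pi$. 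The paper instead desuspends: it builds $r$ one suspension down out of the homotopy fibres of $\Sigma\pi$ and $\Sigma\overline{\pi}$, so that $\Sigma^{2}\overline{\pi}\circ r\simeq\ast$ holds automatically because $r$ factors through (the suspension of) the fibre of $\Sigma\overline{\pi}$, and the hypothesis $2\mid p$ enters in showing a certain map $\Sigma\lambda''$ equals $p\cdot\eta\simeq\ast$. Both arguments ultimately rest on the same arithmetic fact, that $p\cdot\eta=0$ in $\pi_{4}(S^{3})\cong\zmodtwo$ when $p$ is even; your version is shorter and avoids the fibration bookkeeping, at the cost of having to verify the lifting of $\Sigma^{2}\eta$. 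On that point your justification is the one place that should be tightened: the phrase about ``$\eta\circ p\simeq\ast$ relative to the relevant Toda bracket'' is not the right statement. The precise argument is that in the Blakers--Massey range the pair sequence for $(P^{4}(p),S^{3})$ gives an exact sequence
\[
\pi_{5}(P^{4}(p))\stackrel{\pi_{\ast}}{\longrightarrow}\pi_{5}(S^{4})\stackrel{\Delta}{\longrightarrow}\pi_{4}(S^{3}),
\]
where $\Delta(\Sigma\gamma)=\underline{p}\circ\gamma$, so the obstruction to lifting $\Sigma^{2}\eta$ is $\underline{p}\circ\Sigma\eta=p\cdot\Sigma\eta$, which vanishes exactly when $2\mid p$. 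With that made explicit, your argument is complete.
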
 

\begin{proof} 
Let $F$ be the homotopy fibre of 
\(\namedright{P^{3}(p)}{\Sigma\pi}{S^{3}}\) 
and consider the homology Serre exact sequence for the homotopy fibration 
\(\nameddright{\Omega S^{3}}{}{F}{}{P^{3}(p)}\).  
This is an exact sequence 
\[\nameddright{H_{3}(\Omega S^{3})}{}{H_{3}(F)}{}{H_{3}(P^{3}(p))}\longrightarrow\cdots 
     \longrightarrow\namedright{H_{1}(P^{3}(p))}{}{0}.\] 
Since $H_{i}(\Omega S^{3})\cong 0\cong H_{i}(P^{3}(p))$ for $i\in\{1,3\}$ we obtain 
$H_{1}(F)\cong H_{3}(F)\cong 0$. Otherwise there is an exact sequence 
\(\namedddright{0}{}{H_{2}(\Omega S^{3})}{}{H_{2}(F)}{}{H_{2}(P^{3}(p))}\longrightarrow 0\). 
As the inclusion of the bottom cell 
\(\namedright{S^{2}}{}{P^{3}(p)}\) 
lifts to $F$ it must be the case that the sequence in $H_{2}$ is 
\(\namedddright{0}{}{\mathbb{Z}}{p}{\mathbb{Z}}{}{\mathbb{Z}/p\mathbb{Z}}\longrightarrow 0\), 
and so $H_{2}(F)\cong\mathbb{Z}$. Thus the $3$-skeleton of $F$ is homotopy equivalent 
to $S^{2}$. 

Now consider the homotopy pullback diagram 
\[\diagram 
     N\rdouble\dto & N\dto & \\ 
     F\rto\dto & P^{3}(p)\rto^-{\Sigma\pi}\dto & S^{3}\ddouble \\ 
     G\rto & \Sigma L(p,q)\rto^-{\Sigma\overline{\pi}} & S^{3} 
  \enddiagram\] 
that defines the spaces $G$ and $N$. Since 
\(\namedright{S^{3}}{\Sigma f}{P^{3}(p)}\) 
composes trivially to $\Sigma L(p,q)$, it lifts to a map 
\(\lambda\colon\namedright{S^{3}}{}{N}\). 
Let $\lambda'$ be the composite 
\(\nameddright{S^{3}}{\lambda}{N}{}{F}\). 
Since the $3$-skeleton of $F$ is $S^{2}$, $\lambda'$ factors as 
\(\namedright{S^{3}}{\lambda''}{S^{2}}\hookrightarrow F\) 
for some map $\lambda''$. Thus there is a homotopy commutative diagram 
\[\diagram 
       S^{3}\rto^-{\lambda}\dto^-{\lambda''} & N\dto \\ 
       S^{2}\rto & F.  
  \enddiagram\] 
Let $D$ be the homotopy cofibre of $\lambda''$. Then as the composite 
\(\nameddright{N}{}{F}{}{G}\) 
is null homotopic (it is two consecutive maps in a homotopy fibration), there 
is an extension 
\(\namedright{D}{}{G}\). 

We claim that $\Sigma\lambda''$ is null homotopic, implying that 
$\Sigma D\simeq S^{3}\vee S^{5}$. If so, let $r$ be the composite 
\(r\colon S^{5}\hookrightarrow\nameddright{\Sigma D}{}{\Sigma G}{}{\Sigma L(p,q)}\). 
Observe that as 
\(\nameddright{G}{}{L(p,q)}{\Sigma\overline{\pi}}{S^{3}}\) 
is a fibration, the composite $\Sigma^{2}\overline{\pi}\circ r$ is null homotopic. 
Further, the Blakers-Massey Theorem implies that the homotopy fibration 
\(\nameddright{N}{}{P^{3}(p)}{\Sigma i}{\Sigma L(p,q)}\) 
is the same as the homotopy cofibration 
\(\nameddright{S^{3}}{\Sigma f}{P^{3}(p)}{\Sigma i}{\Sigma L(p,q)}\) 
in dimensions~$\leq 3$, and $\lambda$ is the inclusion of the bottom cell. 
A homology argument then shows that the composite 
\(\nameddright{S^{5}}{r}{\Sigma^{2} L(p,q)}{\Sigma^{2} g}{S^{5}}\) 
is homotopic to the identity map. This proves the second assertion 
of the lemma. 

It remains to show that $\Sigma\lambda''$ is null homotopic. Suppose 
instead that $\Sigma\lambda''$ is essential. Then as $\pi_{4}(S^{3})\cong\mathbb{Z}/2\mathbb{Z}$, 
with generator $\eta$, we have $\Sigma\lambda''\simeq\eta$. By definition 
of $\lambda''$, the composite 
\(\namedddright{S^{3}}{\lambda''}{S^{2}}{}{F}{}{P^{3}(p)}\) 
is homotopic to $\Sigma f$. By Lemma~\ref{2suspL}, $\Sigma^{2} f$ is null 
homotopic, implying that there is a lift 
\[\diagram 
         & S^{4}\dto^{\Sigma\lambda''}\dldashed|>\tip_{\gamma} & \\ 
         M\rto & S^{3}\rto & \Sigma P^{3}(p) 
  \enddiagram\]
for some map $\gamma$, where $M$ is the homotopy fibre of 
\(\namedright{S^{3}}{}{\Sigma P^{3}(p)}\). 
The Serre exact sequence shows that the $4$-skeleton of $M$ is $S^{3}$, 
so $\gamma$ factors as 
\(\namedright{S^{4}}{t\cdot\eta}{S^{3}}\hookrightarrow M\) 
for some $t\in\mathbb{Z}/2\mathbb{Z}$. As $\Sigma\lambda''$ is assumed to 
be essential, it must be that $t=1$. Further, the composite 
\(S^{3}\hookrightarrow\namedright{M}{}{S^{3}}\) 
is the degree~$p$ map. Therefore $\Sigma\lambda''\simeq p\cdot\eta$. 
By hypothesis, $2\,|\, p$. Thus $p\cdot\eta\simeq\ast$, implying that 
$\Sigma\lambda''\simeq\ast$, a contradiction. Hence it must be the case 
that $\Sigma\lambda''$ is null homotopic. 
\end{proof} 

The aim is to use Lemma~\ref{ginv} to prove a splitting result; some care 
needs to be taken with components. The homotopy cofibration sequence 
\(\namedddright{S^{2}}{f}{P^{2}(p)}{}{L(p,q)}{g}{S^{3}}\) 
induces a homotopy fibration sequence 
\[\namedddright{\mapstar(S^{3},BU(n))}{g^{\ast}}{\mapstar(L(p,q),BU(n))}{}{\mapstar(P^{2}(p),BU(n))} 
      {f^{\ast}}{\mapstar(S^{2},BU(n))}.\] 
Let $\bar{f}^{\ast}$ be the restriction of $f^{\ast}$ to the $k^{th}$ component. 
Then we obtain a homotopy fibration diagram 
\[\spreaddiagramcolumns{-1pc}\diagram 
     \mapstar(S^{3},BU(n))\rto^-{\bar{g}^{\ast}}\ddouble & \mapstar_{k}(L(p,q),BU(n))\rto\dto 
           & \mapstar_{k}(P^{2}(p),BU(n)\rto^-{\bar{f}^{\ast}}\dto & \mapstar(S^{2},BU(n))\ddouble \\ 
     \mapstar(S^{3},BU(n))\rto^-{g^{\ast}} & \mapstar(L(p,q),BU(n))\rto 
           & \mapstar(P^{2}(p),BU(n))\rto^-{f^{\ast}} & \mapstar(S^{2},BU(n))  
  \enddiagram\] 
that defines the map $\bar{g}^{\ast}$. The map $r$ in Lemma~\ref{ginv} induces a map 
\[r^{\ast}\colon\namedright{\Omega^{2}\mapstar(L(p,q),BU(n))}{}{\Omega^{2}\mapstar(S^{3},BU(n))}\] 
with the property that $r^{\ast}\circ\Omega^{2} g$ is homotopic to the identity map. 
Let $\overline{r}$ be the composite 
\[\overline{r}\colon\nameddright{\Omega^{2}\mapstar_{k}(L(p,q),BU(n))}{} 
      {\Omega^{2}\mapstar(L(p,q),BU(n))}{r^{\ast}}{\Omega^{2}\mapstar(S^{3},BU(n))}.\]  
Then from the left square in the diagram above we immediately obtain the following. 

\begin{lemma} 
   \label{rinv} 
   Suppose that $2\,|\, p$. Then the composite $\overline{r}\circ\Omega^{2}\bar{g}^{\ast}$ 
   is homotopic to the identity map.~$\qqed$  
\end{lemma}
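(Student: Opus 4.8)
The plan is to derive the statement directly, with no new geometric input: it is a formal consequence of the commutativity of the left-hand square of the fibration diagram immediately preceding the statement, together with the defining property of $r^{\ast}$ established in Lemma~\ref{ginv}.

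First I would fix notation, writing $\iota\colon\mapstar_{k}(L(p,q),BU(n))\to\mapstar(L(p,q),BU(n))$ for the inclusion of the $k^{th}$ component. By construction the left square of the fibration diagram commutes, and since its left vertical arrow is the identity of $\mapstar(S^{3},BU(n))$, this says exactly that $\iota\circ\bar{g}^{\ast}\simeq g^{\ast}$. Applying the functor $\Omega^{2}$ then yields $\Omega^{2}\iota\circ\Omega^{2}\bar{g}^{\ast}\simeq\Omega^{2} g$, where $\Omega^{2} g$ is the map denoted this way in the construction of $r^{\ast}$.

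Next I would unwind the definition of $\overline{r}$, which is the composite of $\Omega^{2}\iota$ with $r^{\ast}$. Hence
\[\overline{r}\circ\Omega^{2}\bar{g}^{\ast}\;=\;r^{\ast}\circ\Omega^{2}\iota\circ\Omega^{2}\bar{g}^{\ast}\;\simeq\;r^{\ast}\circ\Omega^{2} g\;\simeq\;\mathrm{id},\]
where the last homotopy is precisely the property of $r^{\ast}$ furnished by Lemma~\ref{ginv}, namely that $r^{\ast}\circ\Omega^{2} g$ is homotopic to the identity. This gives the lemma.

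The one point that deserves a brief check is the bookkeeping of path components: that restricting the fibration $\mapstar(L(p,q),BU(n))\to\mapstar(P^{2}(p),BU(n))$ over $\mapstar_{k}(P^{2}(p),BU(n))$ has total space exactly $\mapstar_{k}(L(p,q),BU(n))$, so that $\bar{g}^{\ast}$ and the square above are the correct objects. This is immediate from Lemma~\ref{bundles}(c), which exhibits $i^{\ast}$ as an isomorphism on $[-,BU(n)]$. Apart from that there is no obstacle here; all of the substantive work already sits in Lemma~\ref{ginv}, and the present statement is a routine formal deduction.
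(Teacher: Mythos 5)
Your argument is correct and is precisely the deduction the paper intends: the lemma is stated with its proof absorbed into the preceding construction (``from the left square in the diagram above we immediately obtain the following''), namely that $\overline{r}\circ\Omega^{2}\bar{g}^{\ast}=r^{\ast}\circ\Omega^{2}\iota\circ\Omega^{2}\bar{g}^{\ast}\simeq r^{\ast}\circ\Omega^{2}g^{\ast}\simeq \mathrm{id}$ by the defining property of $r^{\ast}$ from Lemma~\ref{ginv}. Your added remark on component bookkeeping via Lemma~\ref{bundles}(c) is a reasonable clarification of a point the paper leaves implicit, but the substance of the argument is the same.
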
  

By Lemma~\ref{rinv}, the homotopy fibration 
\[\nameddright{\Omega^{2}\mapstar(S^{3},BU(n))}{\Omega^{2}\bar{g}^{\ast}} 
     {\Omega^{2}\mapstar_{k}(L(p,q),BU(n))}{\Omega^{2} i^{\ast}}{\Omega^{2}\mapstar_{k}(P^{2}(p),BU(n))}\] 
splits to give a homotopy equivalence 
\begin{equation} 
  \label{Theta} 
  \Theta\colon\llnamedright{\Omega^{2}\mapstar_{k}(L(p,q),BU(n))}{\Omega^{2} i^{\ast}\times\overline{r}} 
      {\Omega^{2}\mapstar_{k}(P^{2}(p),BU(n))\times{\Omega^{2}\mapstar(S^{3},BU(n))}}. 
\end{equation} 
Recall that there are homotopy equivalences $B\g_k^*(L(p,q))\simeq\mapstar_k(L(p,q),BU(n))$ and $B\g_k^*(P^2(p))\simeq\mapstar_k(P^2(p),BU(n))$. Also, by Lemma \ref{matchingequivs} there are homotopy equivalences $\Omega\g_k^*(P^{2}(p))\simeq \Omega\g_0^*(P^{2}(p))$, for any $k\in\mathbb Z/p\mathbb Z$. Using these equivalences along with $\Theta$ we obtain homotopy decompositions of the loops on the pointed gauge group $\g_k^*(L(p,q))$. 

\begin{lemma}\label{pgg}
Suppose that $2\,|\, p$. Then there is a homotopy equivalence
\pushQED{\qed}
\[
\Omega\gk^*(L(p,q))\simeq \Omega\g_0^*(P^{2}(p))\times\Omega^4U(n).\qedhere
\]
 \popQED
\end{lemma}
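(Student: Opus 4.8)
The plan is to take the homotopy equivalence $\Theta$ of~(\ref{Theta}), identify the factor $\Omega^{2}\mapstar(S^{3},BU(n))$ explicitly, and then pass from mapping space components to pointed gauge groups using the equivalences $B\g_{k}^{*}(L(p,q))\simeq\mapstar_{k}(L(p,q),BU(n))$ and $B\g_{k}^{*}(P^{2}(p))\simeq\mapstar_{k}(P^{2}(p),BU(n))$ recalled above. First I would note that $[S^{3},BU(n)]\cong\pi_{3}(BU(n))\cong\pi_{2}(U(n))\cong 0$, so $\mapstar(S^{3},BU(n))$ is path-connected and equals $\Omega^{3}BU(n)$; since $\Omega BU(n)\simeq U(n)$, applying $\Omega^{2}$ gives $\Omega^{2}\mapstar(S^{3},BU(n))=\Omega^{5}BU(n)\simeq\Omega^{4}U(n)$. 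Substituting this into $\Theta$ yields a homotopy equivalence
\[\Omega^{2}\mapstar_{k}(L(p,q),BU(n))\simeq\Omega^{2}\mapstar_{k}(P^{2}(p),BU(n))\times\Omega^{4}U(n).\]

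Next I would rewrite the two mapping space factors as loops on gauge groups. Looping the universal fibration $\g_{k}^{*}(X)\to E\g_{k}^{*}(X)\to B\g_{k}^{*}(X)$ gives $\Omega B\g_{k}^{*}(X)\simeq\g_{k}^{*}(X)$, so that $\Omega^{2}\mapstar_{k}(L(p,q),BU(n))\simeq\Omega^{2}B\g_{k}^{*}(L(p,q))\simeq\Omega\g_{k}^{*}(L(p,q))$, and likewise $\Omega^{2}\mapstar_{k}(P^{2}(p),BU(n))\simeq\Omega\g_{k}^{*}(P^{2}(p))$. Finally, Lemma~\ref{matchingequivs} provides a homotopy equivalence $\mapstar_{k}(P^{2}(p),BU(n))\simeq\mapstar_{0}(P^{2}(p),BU(n))$, hence $\Omega\g_{k}^{*}(P^{2}(p))\simeq\Omega\g_{0}^{*}(P^{2}(p))$. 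Concatenating these equivalences produces the claimed splitting.

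I do not expect a genuine obstacle here: the substantive input — the stable triviality of the attaching map of the top cell of $L(p,q)$ when $2\mid p$, the careful choice of the right homotopy inverse $r$ in Lemma~\ref{ginv}, and the component-compatible equivalences of Lemma~\ref{matchingequivs} — has already been established, so what remains is purely formal. The one point requiring care is the bookkeeping of loop degrees: one must track that two loops on a classifying space is one loop on the gauge group, and that $\Omega^{2}\mapstar(S^{3},BU(n))=\Omega^{2}\Omega^{3}BU(n)=\Omega^{4}U(n)$, so that the exponents in the final statement come out exactly as asserted.
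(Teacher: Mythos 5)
Your proposal is correct and matches the paper's argument: the paper likewise deduces Lemma~\ref{pgg} directly from the splitting $\Theta$ of~(\ref{Theta}), the identifications $B\g_{k}^{*}(X)\simeq\mapstar_{k}(X,BU(n))$, the equivalence $\Omega\g_{k}^{*}(P^{2}(p))\simeq\Omega\g_{0}^{*}(P^{2}(p))$ from Lemma~\ref{matchingequivs}, and the identification $\Omega^{2}\mapstar(S^{3},BU(n))\simeq\Omega^{4}U(n)$. Your bookkeeping of loop degrees is exactly right, so nothing further is needed.
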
 

\begin{remark} 
The condition that $2\,|\, p$ is included since the map $r$ from 
Lemma~\ref{ginv} is used. However, if all we cared about was finding a left 
homotopy inverse to $\Omega^{2}\bar{g}^{\ast}$ then the weaker splitting in 
Lemma~\ref{2suspL} could be used and the $2\,|\, p$ condition dropped 
from Lemmas \ref{rinv} and \ref{pgg}. The point of using $r$ is to obtain more 
control over the fibration connecting map, as seen in Lemma~\ref{rboundary}. 
\end{remark} 

Next, we relate $\Theta$ to the twice looped boundary map 
$\Omega^{2}\partial^{L}_{k}$ to get information about $\Omega^{2}\gk(L(p,q))$. 

\begin{lemma} 
   \label{rboundary} 
   Suppose that $2\,|\, p$. Then the composite $\overline{r}\circ\Omega^{2}\partial^{L}_{k}$ 
   is null homotopic. 
\end{lemma}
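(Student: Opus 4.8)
The plan is to reduce the lemma to a single easy unstable computation, using diagram~(\ref{bignatdgrm}) to peel off the boundary map and Lemma~\ref{ginv} to evaluate what remains. First, the left-hand square in~(\ref{bignatdgrm}) gives $\partial^{L}_{k}\simeq\overline{\pi}^{\ast}\circ\partial^{S}_{\bar{k}}$ (the vertical maps on $U(n)$ being identities), so $\Omega^{2}\partial^{L}_{k}\simeq\Omega^{2}\overline{\pi}^{\ast}\circ\Omega^{2}\partial^{S}_{\bar{k}}$ and hence
\(\overline{r}\circ\Omega^{2}\partial^{L}_{k}\simeq(\overline{r}\circ\Omega^{2}\overline{\pi}^{\ast})\circ\Omega^{2}\partial^{S}_{\bar{k}}.\)
It therefore suffices to show that $\overline{r}\circ\Omega^{2}\overline{\pi}^{\ast}$ is null homotopic.

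Next I would unwind the two maps. Since $\overline{r}$ is the inclusion of the $k$th component into $\Omega^{2}\mapstar(L(p,q),BU(n))$ followed by $r^{\ast}$, and $\Omega^{2}\overline{\pi}^{\ast}$ is (on the components named by Lemma~\ref{bundles}~(d)) the restriction of the map $\overline{\pi}^{\ast}$ on the full mapping spaces, the composite $\overline{r}\circ\Omega^{2}\overline{\pi}^{\ast}$ is a restriction of
\[\nameddright{\Omega^{2}\mapstar(S^{2},BU(n))}{\Omega^{2}\overline{\pi}^{\ast}}{\Omega^{2}\mapstar(L(p,q),BU(n))}{r^{\ast}}{\Omega^{2}\mapstar(S^{3},BU(n))}.\]
Under the adjunction $\Omega^{2}\mapstar(X,BU(n))\cong\mapstar(\Sigma^{2}X,BU(n))$ --- which is exactly how $r^{\ast}$ and $\Omega^{2}\overline{\pi}^{\ast}$ were defined --- this composite is $(\Sigma^{2}\overline{\pi}\circ r)^{\ast}$, where $\Sigma^{2}\overline{\pi}\circ r$ is the composite $\nameddright{S^{5}}{r}{\Sigma^{2}L(p,q)}{\Sigma^{2}\overline{\pi}}{S^{4}}$. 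A null homotopy of $\Sigma^{2}\overline{\pi}\circ r$ then induces a null homotopy of $(\Sigma^{2}\overline{\pi}\circ r)^{\ast}$, and hence of its restriction $\overline{r}\circ\Omega^{2}\overline{\pi}^{\ast}$.

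It remains to prove $\Sigma^{2}\overline{\pi}\circ r\simeq\ast$, and this is where the hypothesis $2\mid p$ enters, through Lemma~\ref{ginv}. By construction $\overline{\pi}$ is $h$ followed by the map $\namedright{S^{2}\vee S^{3}}{}{S^{2}}$ that collapses $S^{3}$, so $\Sigma^{2}\overline{\pi}$ is the composite of $\namedright{\Sigma^{2}L(p,q)}{\Sigma^{2}h}{S^{4}\vee S^{5}}$ with the projection onto the $S^{4}$ wedge summand. By Lemma~\ref{ginv}, $\Sigma^{2}h\circ r$ is homotopic to the inclusion of the $S^{5}$ wedge summand, and the composite $\nameddright{S^{5}}{}{S^{4}\vee S^{5}}{}{S^{4}}$ of the inclusion of the $S^{5}$ summand with the projection onto $S^{4}$ is constant. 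Hence $\Sigma^{2}\overline{\pi}\circ r\simeq\ast$, which completes the proof. I expect the only real friction to be bookkeeping --- verifying that the restriction of $(\Sigma^{2}\overline{\pi}\circ r)^{\ast}$ to the $k$th component really is $\overline{r}\circ\Omega^{2}\overline{\pi}^{\ast}$, so that the various component, basepoint and adjunction identifications are mutually compatible; once Lemma~\ref{ginv} is available the homotopy-theoretic content is immediate.
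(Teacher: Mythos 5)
Your proof is correct and uses the same essential ingredients as the paper: the factorisation $\partial^{L}_{k}\simeq\overline{\pi}^{\ast}\circ\partial^{S}_{\bar{k}}$ coming from the naturality of the evaluation fibrations, the decomposition $\overline{\pi}=\pi_{1}\circ h$, and the refined property of $r$ in Lemma~\ref{ginv} that $\Sigma^{2}h\circ r$ is the inclusion of the $S^{5}$ wedge summand. The only difference is organisational --- you carry out the computation adjointly on the domain side, reducing everything to $\Sigma^{2}\overline{\pi}\circ r\simeq\ast$, whereas the paper works on the mapping-space side via the product decomposition of $\mapstar_{\bar{k}}(S^{2}\vee S^{3},BU(n))$ and the identification $\partial^{\vee}_{\bar{k}}\simeq\partial^{S}_{\bar{k}}\times\ast$; the component bookkeeping you flag is handled at the same level of detail in the paper.
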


\begin{proof} 
By definition, $\overline{\pi}$ is the composite 
\(\nameddright{L(p,q)}{h}{S^{2}\vee S^{3}}{\pi_{1}}{S^{2}}\) 
where $\pi_{1}$ collapses $S^{3}$ to a point. Since $[S^{3}, BU(n)]\cong 0$ 
the map $\pi_{1}$ induces an isomorphism 
\(\namedright{[S^{2},BU(n)]}{\pi_{1}^{\ast}}{[S^{2}\vee S^{3},BU(n)]}\), 
implying that there is a commutative diagram of evaluation fibration sequences 
\begin{equation} 
  \label{Lveedgrm} 
  \diagram 
      U(n)\rto^-{\partial^{S}_{\bar{k}}}\ddouble & \mapstar_{\bar{k}}(S^{2},BU(n))\rto\dto^{\pi_{1}^{\ast}} 
           & \map_{\bar{k}}(S^{2},BU(n))\rto^-{ev}\dto & BU(n)\ddouble \\ 
      U(n)\rto^-{\partial_{\bar{k}}^{\vee}}\ddouble & \mapstar_{\bar{k}}(S^{2}\vee S^{3},BU(n))\rto\dto^{h^{\ast}} 
           & \map_{\bar{k}}(S^{2}\vee S^{3},BU(n))\rto^-{ev}\dto & BU(n)\ddouble \\ 
      U(n)\rto^-{\partial_{k}^{L}} & \mapstar_{k}(L(p,q),BU(n))\rto 
           & \map_{k}(L(p,q),BU(n))\rto^-{ev} & BU(n). 
  \enddiagram 
\end{equation}  
where $\bar{k}\in\mathbb{Z}$ reduces to $k$ mod $p$. Writing 
\[\mapstar_{k}(S^{2}\vee S^{3},BU(n))\simeq\mapstar_{k}(S^{2},BU(n))\times\mapstar(S^{3},BU(n))\]  
the map $\pi_{1}^{\ast}$ in~(\ref{Lveedgrm}) is the inclusion into the left factor. Thus  
$\partial^{\vee}_{k}\simeq\partial^{S}_{k}\times\ast$. 

Now consider the diagram 
\[\diagram 
     \Omega^{2}\mapstar_{\bar{k}}(S^{2}\vee S^{3},BU(n))\rto^-{\Omega^{2} h^{\ast}}\dto^{=} 
         & \Omega^{2}\mapstar_{k}(L(p,q),BU(n))\dto &  \\ 
     \Omega^{2}\mapstar(S^{2}\vee S^{3},BU(n))\rto^-{\Omega^{2} h^{\ast}} 
          & \Omega^{2}\mapstar(L(p,q),BU(n))\rto^{r^{\ast}} 
          & \Omega^{2}\mapstar(S^{3},BU(n)). 
  \enddiagram\]
The left square is obtained by restricting to $k^{th}$-components. Notice that the 
composite along the upper direction around the diagram is $\overline{r}\circ\Omega^{2} h^{\ast}$. 
By Lemma~\ref{ginv}, the composite along the bottom row is homotopic to the projection. 
Thus $\overline{r}\circ\Omega^{2} h^{\ast}$ is homotopic to the projection. 

Using~(\ref{Lveedgrm}) we therefore obtain a homotopy commutative diagram 
\begin{equation} 
  \label{vLdgrm} 
  \diagram 
      \Omega^{2} U(n)\rto^-{\Omega^{2}\partial_{\bar{k}}^{\vee}}\ddouble 
           & \Omega^{2}\mapstar_{\bar{k}}(S^{2}\vee S^{3},BU(n))\drto^{\mbox{proj}}\dto^{\Omega^{2} h^{\ast}} & \\ 
      \Omega^{2} U(n)\rto^-{\Omega^{2}\partial_{k}^{L}} & \Omega^{2}\mapstar_{k}(L(p,q),BU(n))\rto^-{\overline{r}}  
           & \Omega^{2}\mapstar(S^{3},BU(n)).  
  \enddiagram 
\end{equation}      
As $\partial^{\vee}_{k}\simeq\partial^{S}_{k}\times\ast$, the homotopy commutativity 
of~(\ref{vLdgrm}) implies that $\overline{r}\circ\Omega^{2}\partial^{L}_{k}\simeq\ast$. 
\end{proof} 

\begin{theorem} 
   \label{gaugesplit} 
   Suppose that $2\,|\, p$. Then there is a homotopy equivalence 
   \[\Omega^{2}\gk(L(p,q))\simeq\Omega^{2}\gk(P^{2}(p))\times\Omega^{5} U(n).\] 
\end{theorem}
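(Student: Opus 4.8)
The plan is to analyse the twice-looped evaluation fibration for $L(p,q)$ by transporting it along the homotopy equivalence $\Theta$ of~(\ref{Theta}) and using the commutative square in~(\ref{bignatdgrm}) together with Lemma~\ref{rboundary}. Since the homotopy fibre of $\partial^{L}_{k}$ is $\gk(L(p,q))$ and looping preserves homotopy fibrations, the homotopy fibre of
\[\Omega^{2}\partial^{L}_{k}\colon\namedright{\Omega^{2} U(n)}{}{\Omega^{2}\mapstar_{k}(L(p,q),BU(n))}\]
is $\Omega^{2}\gk(L(p,q))$. So it suffices to identify this homotopy fibre with $\Omega^{2}\gk(P^{2}(p))\times\Omega^{5}U(n)$.

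First I would compute the two coordinates of $\Theta\circ\Omega^{2}\partial^{L}_{k}$, recalling from~(\ref{Theta}) that $\Theta$ has coordinates $\Omega^{2} i^{\ast}$ and $\overline{r}$. The first coordinate is $\Omega^{2} i^{\ast}\circ\Omega^{2}\partial^{L}_{k}=\Omega^{2}(i^{\ast}\circ\partial^{L}_{k})$, which is homotopic to $\Omega^{2}\partial^{P}_{k}$ by commutativity of the left-hand square relating the middle and bottom rows of~(\ref{bignatdgrm}). The second coordinate is $\overline{r}\circ\Omega^{2}\partial^{L}_{k}$, which is null homotopic by Lemma~\ref{rboundary}. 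Hence $\Theta\circ\Omega^{2}\partial^{L}_{k}$ is homotopic to the map
\[(\Omega^{2}\partial^{P}_{k},\,\ast)\colon\namedright{\Omega^{2} U(n)}{}{\Omega^{2}\mapstar_{k}(P^{2}(p),BU(n))\times\Omega^{2}\mapstar(S^{3},BU(n))}\]
with constant second coordinate.

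Next I would take homotopy fibres. As $\Theta$ is a homotopy equivalence, composing $\Omega^{2}\partial^{L}_{k}$ with it leaves the homotopy fibre unchanged up to homotopy equivalence; and since $(\Omega^{2}\partial^{P}_{k},\ast)$ maps into a product with constant second coordinate, its homotopy fibre is the product of the homotopy fibre of $\Omega^{2}\partial^{P}_{k}$ with the loop space of the second factor, i.e.
\[\Omega^{2}\gk(P^{2}(p))\times\Omega\bigl(\Omega^{2}\mapstar(S^{3},BU(n))\bigr).\]
Since $\mapstar(S^{3},BU(n))=\Omega^{3}BU(n)$ and $\Omega BU(n)\simeq U(n)$, the second factor is $\Omega^{6}BU(n)\simeq\Omega^{5}U(n)$, and comparing the two descriptions of the homotopy fibre of $\Omega^{2}\partial^{L}_{k}$ gives the theorem.

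The substantive obstacles have already been cleared: the careful choice of right homotopy inverse $r$ to $\Sigma^{2} g$ in Lemma~\ref{ginv}, and the resulting vanishing of $\overline{r}\circ\Omega^{2}\partial^{L}_{k}$ in Lemma~\ref{rboundary}. What remains is routine but requires care on two points: the standard fact that the homotopy fibre of a map of the form $(g,\ast)\colon X\to Y\times Z$ is the product of the homotopy fibre of $g$ with $\Omega Z$, and the loop-count bookkeeping that turns $\Omega\bigl(\Omega^{2}\mapstar(S^{3},BU(n))\bigr)$ into $\Omega^{5}U(n)$.
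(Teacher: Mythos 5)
Your argument is correct and is essentially the paper's own proof: both identify $\Theta\circ\Omega^{2}\partial^{L}_{k}$ with $(\Omega^{2}\partial^{P}_{k},\ast)$ using the left square of~(\ref{bignatdgrm}) for the first coordinate and Lemma~\ref{rboundary} for the second, then read off the homotopy fibre as $\Omega^{2}\gk(P^{2}(p))\times\Omega^{5}U(n)$. The loop-count bookkeeping via $\mapstar(S^{3},BU(n))=\Omega^{3}BU(n)\simeq\Omega^{2}U(n)$ is also as in the paper.
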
 

\begin{proof} 
Combining the homotopy equivalence 
\[\llnamedright{\Omega^{2}\mapstar_{k}(L(p,q),BU(n))}{\Omega^{2} i^{\ast}\times\overline{r}} 
      {\Omega^{2}\mapstar_{k}(P^{2}(p),BU(n))\times{\Omega^{2}\mapstar(S^{3},BU(n))}}\]  
in~(\ref{Theta}) with the null homotopy in Lemma~\ref{rboundary} gives a homotopy 
commutative diagram 
\[\diagram 
     \Omega^{2} U(n)\rto^-{\Omega^{2}\partial^{P}_{k}}\ddouble 
          & \Omega^{2}\mapstar_{k}(P^{2}(p),BU(n))\dto \\ 
     \Omega^{2} U(n)\rto^-{\Omega^{2}\partial^{L}_{k}} 
          & \Omega^{2}\mapstar_{k}(L(p,q),BU(n)). 
    \enddiagram\] 
The asserted homotopy equivalence now follows, noting 
that $\Omega^{2}\mapstar(S^{3},BU(n))=\Omega^{4} U(n)$. 
\end{proof} 

Finally, we assemble our results to prove Theorem~\ref{main}. 

\begin{proof}[Proof of Theorem~\ref{main}] 
Part~(a) is Theorem~\ref{gaugesplit} and part~(b) is Proposition~\ref{Mooregauge}. 
\end{proof} 

\begin{corollary} 
   \label{ggLtypes} 
   Suppose that $k\equiv u\ell\bmod{n}$ where $(u,np)=1$ and that $2\,|\, p$. 
   Then there is a homotopy equivalence $\Omega^{2}\gk(L(p,q))\simeq\Omega^{2}\g_{\ell}(L(p,q))$.~$\qqed$ 
\end{corollary}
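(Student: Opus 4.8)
The plan is simply to feed the hypotheses of the corollary into the two parts of Theorem~\ref{main} that have already been established. The assumptions are tailored for exactly this: the condition $2\mid p$ is precisely what Theorem~\ref{gaugesplit} needs, and the condition $k\equiv u\ell\bmod n$ with $(u,np)=1$ is precisely what Proposition~\ref{Mooregauge} needs. So no new geometry or obstruction theory is required here; the substantive work was already carried out in Lemma~\ref{ginv} (which feeds Theorem~\ref{gaugesplit}) and in Lemmas~\ref{uklsphere}--\ref{uklMoore} (which feed Proposition~\ref{Mooregauge}).

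First I would apply Theorem~\ref{gaugesplit} once with first Chern class $k$ and once with first Chern class $\ell$ — the proof of that theorem is stated for a general component, so both instances are legitimate — to obtain homotopy equivalences
\[\Omega^{2}\gk(L(p,q))\simeq\Omega^{2}\gk(P^{2}(p))\times\Omega^{5}U(n),\qquad\Omega^{2}\g_{\ell}(L(p,q))\simeq\Omega^{2}\g_{\ell}(P^{2}(p))\times\Omega^{5}U(n).\]
Next, Proposition~\ref{Mooregauge} gives a homotopy equivalence $\gk(P^{2}(p))\simeq\g_{\ell}(P^{2}(p))$; looping it twice yields $\Omega^{2}\gk(P^{2}(p))\simeq\Omega^{2}\g_{\ell}(P^{2}(p))$. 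Substituting this into the first displayed equivalence and then reading the second one from right to left produces the chain
\[\Omega^{2}\gk(L(p,q))\simeq\Omega^{2}\gk(P^{2}(p))\times\Omega^{5}U(n)\simeq\Omega^{2}\g_{\ell}(P^{2}(p))\times\Omega^{5}U(n)\simeq\Omega^{2}\g_{\ell}(L(p,q)),\]
which is the asserted equivalence.

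The only point to keep in mind is that every arrow above is an unconstrained homotopy equivalence — we are not asserting compatibility with the boundary maps $\partial_{k}^{L}$, $\partial_{\ell}^{L}$, or with the underlying bundle structures — so stringing the three equivalences together is unproblematic. Indeed the corollary is deliberately weak in this respect: the principal $U(n)$-bundles with Chern classes $k$ and $\ell$ over $L(p,q)$ may be genuinely non-isomorphic, yet their gauge groups become equivalent after two loopings once $(k,n)=(\ell,n)$ and $2\mid p$. Thus there is no real obstacle to overcome at this stage; the statement is a formal consequence of Theorem~\ref{gaugesplit} and Proposition~\ref{Mooregauge}.
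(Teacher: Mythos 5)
Your proposal is correct and is exactly the argument the paper intends: the corollary is stated with an immediate $\qqed$ because it follows formally by applying Theorem~\ref{gaugesplit} to both components $k$ and $\ell$, applying Proposition~\ref{Mooregauge} and looping twice, and chaining the resulting equivalences. No further comment is needed.
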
 

For example, take $p=2$. Then if $(k,2n)=(\ell,2n)$ there is a homotopy equivalence 
$\Omega^{2}\gk(L(2,t))\simeq\Omega^{2}\g_{\ell}(L(2,t))$.

\bibliographystyle{amsalpha}

\end{document}